\documentclass[a4paper,11pt]{amsart}



\usepackage[french, ngerman, italian, english]{babel}
\usepackage{amsmath,amsthm, amssymb}
\usepackage{setspace}
\usepackage{anysize}
\usepackage{url}
\usepackage{bm}
\usepackage{graphicx}
\usepackage[colorlinks=true]{hyperref}
\usepackage{mathptmx}
\usepackage{paralist}
\usepackage{verbatim}
\usepackage[utf8]{inputenc}

\theoremstyle{plain}
\newtheorem{thm}{\bf Theorem}[section]

\newtheorem{prop}[thm]{\bf Proposition}
\newtheorem{lemma}[thm]{\bf Lemma}
\newtheorem{corollary}[thm]{\bf Corollary}
\newtheorem{question}[thm]{\bf Question}

\theoremstyle{definition}
\newtheorem{definition}[thm]{\bf Definition}
\theoremstyle{remark}
\newtheorem{remark}[thm]{\bf Remark}
\newtheorem{discussion}[thm]{\bf Discussion}
\newtheorem{example}[thm]{\bf Example}
\theoremstyle{example}

%
%
\def\NZQ{\Bbb}               
\def\NN{{\NZQ N}}

\def\ZZ{{\NZQ Z}}

\def\PP{{\NZQ P}}

%
%
\def\frk{\frak}               

\def\mm{{\frk m}}

\def\Phi{{\frk n}}
\def\Phi{{\frk N}}
%

%

%
\def\opn#1#2{\def#1{\operatorname{#2}}} 
%
\opn\chara{char} \opn\length{\ell} \opn\pd{pd} \opn\rk{rk}
\opn\projdim{proj\,dim} \opn\injdim{inj\,dim} \opn\rank{rank}
\opn\depth{depth} \opn\grade{grade} \opn\height{height}
\opn\embdim{emb\,dim} \opn\codim{codim}

\opn\Tr{Tr} \opn\bigrank{big\,rank}
\opn\superheight{superheight}\opn\lcm{lcm}
\opn\trdeg{tr\,deg}
\opn\reg{reg} \opn\lreg{lreg} \opn\ini{in} \opn\lpd{lpd}
\opn\size{size} \opn\sdepth{sdepth}
\opn\link{link}\opn\fdepth{fdepth}\opn\lex{lex}
%
\opn\div{div} \opn\Div{Div} \opn\cl{cl} \opn\Cl{Cl}
%
%
\opn\Spec{Spec} \opn\Supp{Supp} \opn\supp{supp} \opn\Sing{Sing}
\opn\Ass{Ass} \opn\Min{Min}\opn\Mon{Mon}
%
%
\opn\Ann{Ann} \opn\Rad{Rad} \opn\Soc{Soc}
%
%
\opn\Im{Im} \opn\Ker{Ker} \opn\Coker{Coker} \opn\Am{Am}
\opn\Hom{Hom} \opn\Tor{Tor} \opn\Ext{Ext} \opn\End{End}
\opn\Aut{Aut} \opn\id{id}

\opn\nat{nat}
\opn\pff{pf}
\opn\Pf{Pf} \opn\GL{GL} \opn\SL{SL} \opn\mod{mod} \opn\ord{ord}
\opn\Gin{Gin} \opn\Hilb{Hilb}\opn\sort{sort}
%
%
\opn\aff{aff} \opn\con{conv} \opn\relint{relint} \opn\st{st}
\opn\lk{lk} \opn\cn{cn} \opn\core{core} \opn\vol{vol}
\opn\link{link} \opn\star{star}\opn\lex{lex}\opn\set{set}
\opn\gr{gr}

%
%

\def\pot#1#2{#1[\kern-0.28ex[#2]\kern-0.28ex]}

%
%
\opn\dirlim{\underrightarrow{\lim}}
\opn\inivlim{\underleftarrow{\lim}}
%
%
%

%

\def \chara{{\operatorname{char}}}

\def \height{{\operatorname{ht}}}
\def \reg{{\operatorname{reg}}}
\def \depth{{\operatorname{depth}}}
\def \Gin{{\operatorname{Gin}}}
\def \grade{{\operatorname{grade}}}
\def \Ker{{\operatorname{Ker}}}
\def \pd{{\operatorname{pd}}}
\def \mm{{\mathfrak{m}}}

\def \NN{\mathbb N}
\def \ZZ{\mathbb Z}

\def \M{\mathcal M}
\def \S{\mathcal S}

\def \chara{{\operatorname{char}}}

\def \height{{\operatorname{ht}}}
\def \reg{{\operatorname{reg}}}
\def \depth{{\operatorname{depth}}}
\def \Gin{{\operatorname{Gin}}}
\def \grade{{\operatorname{grade}}}
\def \Ker{{\operatorname{Ker}}}
\def \pd{{\operatorname{pd}}}
\def \mm{{\mathfrak{m}}}

\def \NN{\mathbb N}
\def \ZZ{\mathbb Z}

\def \vv{{\bf v}}

\def \iii{{\bf i}}
\def \jjj{{\bf j}}

\def \S_d{\mathcal{M}(d)}
\def \M{\mathcal M}
\def \S{\mathcal S}
\def \SS{\mathbb S}
\def \TT{\mathbb T}

\newcommand{\m}{\frak{m}}

\def\ini{\operatorname{\rm in}}

\begin{document}
\title{The possible extremal Betti numbers of a homogeneous ideal}
\author{J\"urgen Herzog}
\address{Fachbereich Mathematik, Universit\"at Duisburg-Essen, Campus Essen, 45117
Essen, Germany}
\email{juergen.herzog@uni-essen.de}
\author{Leila Sharifan}
\address{Department of Mathematics, Hakim Sabzevari University, Sabzevar, Iran}
\email{leila-sharifan@aut.ac.ir}
\author{Matteo Varbaro}
\address{Dipartimento di Matematica,
Universit\`a degli Studi di Genova, Italy}
\email{varbaro@dima.unige.it}
\subjclass[2000]{13D02}
\keywords{Betti tables; componentwise linear ideals; extremal Betti numbers; strongly stable monomial ideals}
\date{{\small \today}}
\maketitle

\begin{abstract}
We give a numerical characterization of the possible extremal Betti numbers (values as well as positions) of any homogeneous ideal in a polynomial ring over a field of characteristic $0$.
\end{abstract}

\section*{Introduction}
The main purpose of this note is to characterize the possible extremal Betti numbers (values as well as positions) of any homogeneous ideal in a polynomial ring over a field of characteristic $0$. These special graded Betti numbers were introduced by Bayer, Charalambous and Popescu in \cite{BCP}: One reason why they naturally arise is that they are equal to the dimensions of certain cohomology vector spaces of the projective scheme associated to the ideal. We also investigate on the possible Betti tables of a componentwise linear ideal: Such a problem seems to be very hard, indeed we could solve it just in some special cases. We provide some examples illustrating the main obstructions to the issue.

\vspace{2mm}

After a preliminary section, in Section \ref{seccomp} we  study the possible Betti tables of componentwise linear ideals, introduced by Herzog and Hibi in \cite{HH}. This issue is equivalent to a characterization of the graded Betti numbers of strongly stable ideals. We denote by $I_j$  the $j$th graded component of a strongly stable ideal $I$,  set $\mu_{ij}(I)$ equal to the number of the monomials in $I_{j}\cap K[x_1,\ldots ,x_i]$ divisible by $x_i$, and define the matrix $\M(I)=(\mu_{ij}(I))$, which we call the {\it matrix of generators} of $I$. As explained in the beginning of Section \ref{seccomp}, the matrix $\M(I)$ and the graded Betti numbers of $I$ determine each other. Thus we are led to characterize the integer matrices $(\mu_{ij})$ for which there exists a strongly stable ideal $I$ such that  $\M(I)=(\mu_{ij})$. From a result obtained by Murai in \cite{Mu}, which yields the characterization of the possible Betti numbers of ideals with linear resolution (see Proposition \ref{mainlinear}), one can deduce some necessary conditions for $(\mu_{ij})$ being the matrix of generators for some strongly stable ideal, Proposition~\ref{main}. Unfortunately these conditions are not sufficient to describe the matrices of generators of  strongly stable ideals, as shown in Example \ref{obstructionscomplin}. The difficulty of the task of characterizing Betti tables of componentwise linear ideals is also shown by Example \ref{nagelroemer}, where we exhibit a noncomponentwise linear ideal (in three variables) with the same Betti table of a componentwise linear ideal, answering negatively a question raised by Nagel and R\"omer in \cite{NR}. After discussing the main obstruction to constructing strongly stable ideals with a prescribed matrix of generators, we give sufficient conditions for a matrix to be of the form $\M(I)$ where $I$ is strongly stable in Proposition \ref{suffcomplin}. As a consequence it is shown in Corollary~\ref{3variables} that the necessary conditions given in Proposition~\ref{main} are also sufficient when dealing with strongly stable ideals in three variables. Another instance for which  the matrix of generators of a particular class of strongly stable ideals can be described is given in Proposition~\ref{d-lex}, which gives the possible matrices of generators of lexsegment ideals, and therefore, exploiting a result in \cite{HH}, of Gotzmann ideals.
Though a complete characterization of the possible Betti numbers of a strongly stable ideal seems to be quite difficult, we succeed in Section \ref{secextremal} to characterize all possible extremal Betti numbers of any homogeneous ideal $I\subset S=K[x_1,\ldots ,x_n]$, provided that $K$ has characteristic $0$. According to \cite{BCP}, a Betti number $\beta_{i, i+j}\neq 0$ of $I$ is called extremal if $\beta_{k, k+l}=0$ for all pairs $(k,l)\neq (i,j)$ with  $k\geq i$ and  $l\geq j$. It is  shown in \cite{BCP} that the positions as well as the values  of the extremal Betti numbers of a graded ideal are preserved under taking the generic initial ideal with respect to the reverse lexicographical order. Thus, since $K$ has characteristic $0$, we may restrict our attention to characterize the extremal Betti numbers of strongly stable ideals. 
More precisely, let $i_1<i_2<\cdots <i_k<n$, $j_1>j_2> \cdots >j_k$ and $b_1,\ldots,b_k$ be sequences of positive integers. In Theorem~\ref{extremalthm} we give  numerical conditions which are  equivalent to the property that there exists a homogeneous ideal $I\subset S$ whose  extremal Betti numbers are precisely  $\beta_{i_p,i_p+j_p}(I)=b_p$ for $p=1,\ldots, k$.

We are very grateful to the anonymous referee for suggesting us the point (iv) of Theorem \ref{extremalthm} and the last statement in Lemma \ref{mireferee1}.

\section{Preliminaries}

Let $n$ be a positive integer. We will essentially work with the polynomial ring
\[S=K[x_1,\ldots ,x_n],\]
where the $x_i$'s are variables over a field $K$. 
Given a monomial $u\in S$, we set:
\begin{equation}\label{maximumvariable}
m(u)=\max \{e\in \{1,\ldots ,n\} \ : \ x_e \ \mbox{ divides } \ u\}.
\end{equation}
A monomial space $V\subset S$ is called {\it stable} if for any monomial $u\in V$, then $(u/x_{m(u)})\cdot x_i\in V$ for all $i< m(u)$. It is called {\it strongly stable} if for any monomial $u\in V$ and for each $j\in \{1,\ldots ,n\}$ such that $x_j$ divides $u$, then $(u/x_j)\cdot x_i\in V$ for all $i< j$. Obviously a strongly stable monomial space is stable. By a stable (strongly stable)  monomial ideal $I\subseteq S$ we mean that the underlying monomial space is stable (strongly stable); or equivalently, that the monomial space $\langle G(I)\rangle$, where $G(I)$ is the unique minimal set of monomial generators of $I$, is stable (strongly stable).

On the monomials, unless we explicitly say differently, we use a degree lexicographical order with respect to the ordering of the variables $x_1>x_2>\ldots >x_n$. A monomial space $V\subset S$ is called {\it lexsegment} if, for all $d\in \NN$, there exists a monomial $u\in S_d$ (the degree $d$ part of $S$) such that
\[V\cap S_d=\langle v\in S_d:v\geq u\rangle.\]
We will sometimes denote by:
\[L_{\ge u}=\{v\in S_d:v\geq u\}.\]
Clearly, a lexsegment monomial space is strongly stable. The celebrated theorem of Macaulay explains when a lexsegment monomial space is an ideal. We remind that given a natural number $a$ and a positive integer $d$, the $d$th {\it Macaulay representation} of $a$ is the unique writing:
\[a=\sum_{i=1}^d\binom{k(i)}{i} \ \ \ \mbox{such that } \ k(d)>k(d-1)>\ldots>k(1)\geq 0,\]
see \cite[Lemma 4.2.6]{BH}. Then:
\[a^{\langle d\rangle}=\sum_{i=1}^d\binom{k(i)+1}{i+1}.\]
A numerical sequence $(h_i)_{i\in\NN}$ is called an {\it O-sequence} if $h_0=1$ and $h_{d+1}\leq h_d^{\langle d\rangle}$ for all $d\geq 1$. (The reader should be careful because the definition of $O$-sequence depends on the numbering: A vector $(m_1,\ldots ,m_n)$ will be an $O$-sequence if $m_1=1$ and and $m_{i+1}\leq m_i^{\langle i-1\rangle}$ for all $i\geq 2$). The theorem of Macaulay (for example see \cite[Theorem 4.2.10]{BH}) says that, given a numerical sequence $(h_i)_{i\in\NN}$, the following are equivalent:
\begin{itemize}
\item[(i)] $(h_i)_{i\in\NN}$ is an {\it O}-sequence with $h_1\leq n$.
\item[(ii)] There is a homogeneous ideal $I\subset S$ such that $(h_i)_{i\in\NN}$ is the Hilbert function of $S/I$.
\item[(iii)] The lexsegment monomial space $L\subset S$ such that $L\cap S_d$ consists in the biggest $\binom{n+d-1}{d}-h_d$ monomials, is an ideal.
\end{itemize}

For any $\ZZ$-graded finitely generated $S$-module $M$, there is a minimal graded free resolution:
\[0\rightarrow \bigoplus_{j\in \ZZ}S(-j)^{\beta_{p,j}(M)}\rightarrow \bigoplus_{j\in \ZZ}S(-j)^{\beta_{p-1,j}(M)}\rightarrow \ldots \rightarrow \bigoplus_{j\in \ZZ}S(-j)^{\beta_{0,j}(M)}\rightarrow M \rightarrow 0,\]
where $S(k)$ denotes the $S$-module $S$ supplied with the new grading $S(k)_i=S_{k+i}$. The celebrated Hilbert's Syzygy theorem (for example see \cite[Corollary 2.2.14 (a)]{BH}) guarantees $p\leq n$. The natural numbers $\beta_{i,j}=\beta_{i,j}(M)$ are numerical invariants of $M$, and they are called the {\it graded Betti numbers} of $M$. The coarser invariants $\beta_i=\beta_i(M)=\sum_{j\in \ZZ}\beta_{i,j}$ are called the {\it (total) Betti numbers} of $M$. A graded Betti number $\beta_{i,i+d}$ is said to be {\it extremal} if $\beta_{i,i+d}\neq 0$ and for all $(p,q)\neq (i,d)$ such that $p\geq i$ and $q\geq d$, $\beta_{p,p+q}=0$. We will refer to the matrix $(\beta_{i,j})$ as the {\it Betti table} of $M$.  Actually, in the situations we will consider in this paper $M=I$ is a homogeneous ideal of $S$. In this case $\beta_{i,j}=0$ whenever $i\geq n$ or $j\leq i$ (unless $I=S$). We will present the Betti table of $I$ as follows:
\[
\left(\begin{array}{cccccc}
                     \beta_{0,1} & \beta_{1,2} & \beta_{2,3} & \cdots & \cdots & \beta_{n-1,n}\\
                     \beta_{0,2} & \beta_{1,3} & \beta_{2,4} & \cdots & \cdots & \beta_{n-1,n+1}\\
                     \beta_{0,3} & \beta_{1,4} & \beta_{2,5} & \cdots & \cdots & \beta_{n-1,n+2}\\
                     \vdots & \vdots & \vdots & \cdots & \cdots & \vdots \\
                   \end{array}\right).
\]
Also if the definition of the Betti table of $M$ predicts infinite many rows, only a finite number of entries are not zero (because $M$ is finitely generated). Consequently, only a finite number of rows are significant, and in the examples we will present throughout the paper we will draw just the significant rows. Notice that a graded Betti number is extremal if and only if it is the nonzero top left ``corner" in a block of zeroes of the Betti table.

For an integer $d$, the $S$-module $M$ is said to have a {\it $d$-linear resolution} if $\beta_{i,j}(M)=0$ for every $i=0,\ldots ,p$ and $j\neq i+d$; equivalently, if $\beta_i(M)=\beta_{i,i+d}(M)$ for any $i=0,\ldots ,p$. Notice that if $M$ has $d$-linear resolution, then it is generated in degree $d$. The $S$-module $M$ is said {\it componentwise linear} if $M_{\langle d\rangle}$ has $d$-linear resolution for all $d\in\ZZ$, where $M_{\langle d\rangle}$ means the $S$-submodule of $M$ generated by the elements of degree $d$ of $M$. It is not difficult to show that if $M$ has a linear resolution, then it is componentwise linear.

Let $I$ be a stable monomial ideal. For all $i\in\{1,\ldots ,n\}$ and $d\in\NN$ we set:
\begin{eqnarray}\label{defmid}
m_{i,d}(I)=|\{u\mbox{ monomials in }G(I)\cap S_d \ : \ m(u)=i\}|\\
m_{i}(I)=|\{u\mbox{ monomials in }G(I) \ : \ m(u)=i\}|.\nonumber
\end{eqnarray}
By the Eliahou-Kervaire formula \cite{EK} (see also \cite[Corollary 7.2.3]{HH2}) we have:
\begin{equation}\label{eliker}
\beta_{i,i+d}(I)=\sum_{k=i}^{n}\binom{k-1}{i}m_{k,d}(I).
\end{equation}
It is convenient to introduce the analog of the $m_{i,d}$'s for all $\ZZ$-graded finitely generated $S$-module $M$: To this aim, for all $i\in\{1,\ldots ,n+1\}$ and $d\in\ZZ$ we set:
\begin{equation}\label{defmidgen}
m_{i,d}(M)=\sum_{k=0}^n(-1)^{k-i+1}\binom{k}{i-1}\beta_{k,k+d}(M).
\end{equation}
The following lemma shows that knowing the $m_{i,d}(M)$'s is equivalent to knowing the Betti table of $M$, and that the two definitions agree when $M=I$ is a stable ideal.
\begin{lemma}\label{m_ib_i}
Let $M$ be a $\ZZ$-graded finitely generated $S$-module. Then:
\begin{equation}\label{elikergen}
\beta_{i,i+d}(M)=\sum_{k=i}^{n+1}\binom{k-1}{i}m_{k,d}(M).
\end{equation}
\end{lemma}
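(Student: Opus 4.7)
The plan is straightforward binomial inversion. The only content of the lemma is that the linear transformation $(\beta_{k,k+d}) \mapsto (m_{i,d})$ defined by \eqref{defmidgen} is invertible, with inverse given by the weights $\binom{k-1}{i}$. So I would substitute the definition of $m_{k,d}(M)$ from \eqref{defmidgen} into the right-hand side of \eqref{elikergen} and interchange the order of summation. Using that $\beta_{j,j+d}(M)=0$ for $j>n$ (by Hilbert's syzygy theorem), this yields
\[
\sum_{k=i}^{n+1}\binom{k-1}{i}m_{k,d}(M) \;=\; \sum_{j=0}^n \beta_{j,j+d}(M)\cdot C_{i,j},
\]
where
\[
C_{i,j} \;=\; \sum_{k=i}^{n+1}(-1)^{j-k+1}\binom{k-1}{i}\binom{j}{k-1}.
\]
The whole lemma reduces to checking that $C_{i,j}=\delta_{ij}$.

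For this, I would set $\ell=k-1$ and use the standard trinomial identity $\binom{\ell}{i}\binom{j}{\ell}=\binom{j}{i}\binom{j-i}{\ell-i}$ (valid for all integers, with the usual convention $\binom{a}{b}=0$ for $b<0$ or $b>a\ge 0$). Factoring out $\binom{j}{i}$ and changing variables $m=\ell-i$, one obtains
\[
C_{i,j} \;=\; (-1)^{j-i}\binom{j}{i}\sum_{m=0}^{n-i}(-1)^{m}\binom{j-i}{m}.
\]
Now one splits into cases. If $j<i$, the factor $\binom{j}{i}$ vanishes. If $j=i$, only the $m=0$ term contributes and yields $1$. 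If $i<j\le n$, then $\binom{j-i}{m}=0$ for $m>j-i$, so the sum equals $\sum_{m=0}^{j-i}(-1)^{m}\binom{j-i}{m}=(1-1)^{j-i}=0$. Hence $C_{i,j}=\delta_{ij}$, and the double sum collapses to $\beta_{i,i+d}(M)$, proving \eqref{elikergen}.

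There is no genuine obstacle here; the only thing to watch is the bookkeeping of summation ranges (in particular that the artificial endpoints $\ell=i-1$ and $\ell>j$ contribute zero automatically, so the truncation by $n+1$ in \eqref{elikergen} is harmless). The final sentence in the statement surrounding the lemma, that the two definitions of $m_{i,d}$ agree when $M=I$ is a stable ideal, then follows by comparing \eqref{elikergen} with the Eliahou–Kervaire formula \eqref{eliker} and invoking the injectivity just established.
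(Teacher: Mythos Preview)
Your proof is correct: the substitution, the interchange of summation, and the evaluation of $C_{i,j}$ via the trinomial identity are all fine, and the use of $j\le n$ to guarantee that the alternating sum runs over the full range is exactly the point that needs to be noted.

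The paper's own argument is different in flavor but equivalent in content. It observes that the defining relation \eqref{defmidgen} is precisely the statement that
\[
\sum_{k=1}^{n+1} m_{k,d}\,t^{k-1}=\sum_{i=0}^n\beta_{i,i+d}(t-1)^i
\]
as polynomials in $t$; substituting $t=s+1$ and reading off the coefficient of $s^i$ on both sides immediately gives \eqref{elikergen}. In other words, the paper packages your binomial inversion as a single change of variable in a generating polynomial, which makes the inverse transformation transparent without any explicit computation of $C_{i,j}$. Your approach has the virtue of being entirely self-contained at the level of binomial identities, and it makes explicit why the truncation at $n+1$ (equivalently, the vanishing of $\beta_{j,j+d}$ for $j>n$) is harmless; the paper's approach is shorter and more conceptual, since the substitution $t\mapsto s+1$ is visibly the inverse of $s\mapsto t-1$. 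Both routes amount to the same upper-triangular inversion.
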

\begin{proof}
Set $m_{k,d}=m_{k,d}(M)$ and $\beta_{i,j}=\beta_{i,j}(M)$. By the definition of the $m_{k,d}$'s we have the following identity in $\ZZ[t]$:
\[\sum_{k=1}^{n+1} m_{k,d}t^{k-1}=\sum_{i=0}^n\beta_{i,i+d}(t-1)^i.\]
Replacing $t$ by $s+1$, we get the identity of $\ZZ[s]$
\[\sum_{k=1}^{n+1}m_{k,d}(s+1)^{k-1}=\sum_{i=0}^n\beta_{i,i+d}s^i,\]
that implies the lemma.
\end{proof}
Let us define also the coarser invariants:
\begin{equation}\label{defmigen}
m_i(M)=\sum_{d\in\ZZ}m_{i,d}(M) \ \ \ \forall \ i=1,\ldots ,n+1.
\end{equation}
%


\section{Graded Betti numbers of componentwise linear ideals}\label{seccomp}

In this section we want to discuss the problem of characterizing the graded Betti numbers of a componentwise linear ideal $I\subset S=K[x_1,\ldots,x_n]$. This is a difficult task, in fact we are not going to solve the problem, rather we are going to explain why it is problematic. Such an issue is equivalent to characterize the possible graded Betti numbers of a strongly stable monomial ideal of $S$. In fact, in characteristic $0$ this is true because the generic initial ideal of any ideal $I$ is strongly stable \cite[Theorem 15.23]{Ei}. Moreover, if $I$ is componentwise linear and the term order is degree reverse lexicographic, then the graded Betti numbers of $I$ are the same of those of $\Gin(I)$ by a result of Aramova, Herzog and Hibi in \cite{AHH}. In positive characteristic it is still true that for a degree reverse lexicographic order the graded Betti numbers of $I$ are the same of those of $\Gin(I)$, provided that $I$ is componentwise linear. But in this case $\Gin(I)$ might be not strongly stable. However, it is known that, if we start from a componentwise linear ideal, it is stable (see Conca, Herzog and Hibi \cite[Lemma 1.4]{CHH}). The graded Betti numbers of a stable ideal do not depend from the characteristic, because of the Eliahou-Kervaire formula \eqref{eliker}. So to compute the graded Betti numbers of $\Gin(I)$ we can consider it in characteristic $0$. Let us call $J$ the ideal $\Gin(I)$ viewed in characteristic $0$. The ideal $J$, being stable, is componentwise linear, so we are done by what said above. Summarizing, we showed:

\begin{prop}\label{bc=bs}
The following sets coincide:
\begin{enumerate}
\item[{\em (1)}] $\{\mbox{Betti tables $(\beta_{i,j}(I))$ where $I\subset S$ is componentwise linear}\}$;
\item[{\em (2)}] $\{\mbox{Betti tables $(\beta_{i,j}(I))$ where $I\subset S$ is strongly stable}\}$;
\end{enumerate}
\end{prop}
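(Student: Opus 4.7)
The inclusion $(2)\subseteq(1)$ is immediate from the preliminaries, since a stable (hence strongly stable) monomial ideal is componentwise linear. So the substantive direction is $(1)\subseteq(2)$, and my plan is to realize the Betti table of any componentwise linear ideal as that of a strongly stable one via a passage through the generic initial ideal with respect to the degree reverse lexicographic order, splitting the argument according to the characteristic of $K$.

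First I would dispose of the characteristic zero case. Given a componentwise linear ideal $I\subset S$, the theorem of Aramova--Herzog--Hibi in \cite{AHH} guarantees that $\Gin(I)$ (for the reverse lex order) has the same graded Betti numbers as $I$. By \cite[Theorem 15.23]{Ei}, in characteristic $0$ the ideal $\Gin(I)$ is automatically strongly stable. Thus $\Gin(I)$ itself is a strongly stable ideal whose Betti table coincides with that of $I$, and we are done in this case by taking it as the required $J$.

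The remaining work is in positive characteristic, which I expect to be the main obstacle, since $\Gin(I)$ need no longer be strongly stable. The saving ingredient is the result of Conca--Herzog--Hibi \cite[Lemma 1.4]{CHH}: when $I$ is componentwise linear, $\Gin(I)$ is at least stable for the reverse lex order, and the Betti numbers of $I$ still agree with those of $\Gin(I)$. The plan is then to lift the monomial data to characteristic zero. Let $J\subset K_{0}[x_{1},\dots,x_{n}]$ with $\chara K_{0}=0$ be the monomial ideal generated by exactly the same monomials as $\Gin(I)$. The Eliahou--Kervaire formula \eqref{eliker} depends only on the invariants $m_{i,d}(\cdot)$ of the minimal monomial generating set and is characteristic free, so $J$ and $\Gin(I)$ share the same graded Betti numbers. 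Now $J$ is stable in characteristic zero, hence componentwise linear, and applying the characteristic zero case to $J$ produces a strongly stable monomial ideal in $K_{0}[x_{1},\dots,x_{n}]$ whose Betti table matches that of $J$, and therefore of $I$.

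To close the loop it remains to transport this strongly stable monomial ideal back to $S$: interpreting it as a monomial ideal in $K[x_{1},\dots,x_{n}]$ and invoking \eqref{eliker} one more time shows that the Betti table is unchanged, producing the desired strongly stable ideal in $S$ with the prescribed Betti table. The whole argument thus reduces to combining the reverse lex behaviour of Betti numbers of componentwise linear ideals (\cite{AHH} in characteristic $0$, together with \cite{CHH} in positive characteristic) with the characteristic independence of Betti numbers of stable ideals coming from Eliahou--Kervaire.
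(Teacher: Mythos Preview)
Your argument is correct and follows essentially the same route as the paper's own proof: for $(1)\subseteq(2)$ you pass to $\Gin(I)$ with respect to reverse lex, invoke \cite{AHH} and \cite[Theorem 15.23]{Ei} in characteristic $0$, and in positive characteristic use \cite[Lemma 1.4]{CHH} to get stability of $\Gin(I)$, then transport the monomial ideal to characteristic $0$ via the characteristic independence of the Eliahou--Kervaire formula and bootstrap. Your final paragraph, in which you explicitly move the resulting strongly stable ideal back to $S$ over $K$ and invoke \eqref{eliker} once more, is a detail the paper leaves implicit, so if anything your write-up is slightly more careful on that point.
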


Before beginning the discussion on graded Betti numbers, it is worthwhile to notice that to characterize the total Betti numbers of a componentwise linear ideal is an easy task. Indeed, together with Proposition \ref{bc=bs}, the following remark of Murai yields the answer:

\begin{remark}\label{murai}(Murai).
Let $(m_1,\ldots ,m_n)$ be a sequence of natural numbers. The following are equivalent:
\begin{itemize}
\item[(i)] $m_1=1$ and $m_{i+1}=0$ whenever $m_i=0$.
\item[(ii)] There exists a strongly stable ideal $I\subset S$ such that $m_i(I)=m_i$ for any $i=1,\ldots ,n$.
\end{itemize}
That (ii) $\implies$ (i) is very easy to show. For the reverse implication, given a sequence $(m_1,\ldots ,m_n)$ satisfying (i), set $k=\max_\ell\{m_\ell\neq 0\}$. By assumption we have $m_i\geq 1$ for all $i=1,\ldots ,k$, therefore it makes sense to define the following monomial spaces for each $j=1,\ldots ,k-1$:
\[V_j=\bigg\langle \prod_{i=1}^{j-2} x_i^{m_{i+1}-1}\cdot x_{j-1}^{m_j-1}x_j^{m_{j+1}}, \ \ \ \prod_{i=1}^{j-2} x_i^{m_{i+1}-1}\cdot x_{j-1}^{m_j-2}x_j^{m_{j+1}+1}, \ \ \ \ldots , \ \ \ \prod_{i=1}^{j-2} x_i^{m_{i+1}-1}\cdot x_j^{m_j+m_{j+1}-1}\bigg\rangle.\]
We also define:
\[V_k=\bigg\langle\prod_{i=1}^{k-2} x_i^{m_{i+1}-1}\cdot x_{k-1}^{m_k-1}x_k, \ \ \ \prod_{i=1}^{k-2} x_i^{m_{i+1}-1}\cdot x_{k-1}^{m_k-2}x_k^2, \ \ \ \ldots , \ \ \ \prod_{i=1}^{k_2} x_i^{m_{i+1}-1}\cdot x_k^{m_k}\bigg\rangle.\]
Clearly, for all $j=1,\ldots ,k$, we have $w_i(V_j)=m_j$ if $i=j$ and $w_i(V_j)=0$ otherwise. Set:
\[I=\bigg(\bigoplus_{j=1}^kV_k\bigg)\subset S.\]
It is easy to see that $I$ is a strongly stable monomial ideal and that $\langle G(I)\rangle =\bigoplus_{j=1}^kV_k$, so we get (ii).
\end{remark}

Actually, also the possible Betti numbers of an ideal with linear resolution are known. Proposition \ref{bc=bs} and \cite[Proposition 3.8]{Mu} yield the following:

\begin{prop}\label{mainlinear}
Let $m_1,\ldots ,m_n$ be a sequence of natural numbers. Then the following are equivalent:
\begin{enumerate}
\item[{\em (1)}] There exists a homogeneous ideal $I\subset S$ with $d$-linear resolution such that $m_k(I)=m_k$ for all $k=1,\ldots ,n$;
\item[{\em (2)}] $(m_1,\ldots ,m_n)$ is an $O$-sequence such that $m_2\leq d$, that is:
\begin{enumerate}
\item[{\em (a)}] $m_1=1$;
\item[{\em (b)}] $m_2\leq d$;
\item[{\em (c)}] $m_{i+1}\leq m_{i}^{\langle i-1 \rangle}$ for any $i=2,\ldots ,n-1$.
\end{enumerate}
\end{enumerate}
\end{prop}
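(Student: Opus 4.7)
My plan is to reduce the equivalence to \cite[Proposition 3.8]{Mu}, using Proposition \ref{bc=bs} as the bridge. Since an ideal $I$ with $d$-linear resolution is componentwise linear, Proposition \ref{bc=bs} yields a strongly stable ideal $J \subset S$ sharing the Betti table of $I$. The ideal $J$ inherits the $d$-linear resolution, so all its minimal generators lie in degree $d$; then by Lemma \ref{m_ib_i} combined with the Eliahou--Kervaire formula \eqref{eliker}, the integers $m_k(J)$ defined combinatorially in \eqref{defmid} agree with those extracted from the Betti table via \eqref{defmigen}, and in particular coincide with $m_k(I)$. This reduces the statement to the strongly stable case.

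For (1) $\Rightarrow$ (2), conditions (a) and (b) are elementary: $x_1^d$ is the only degree-$d$ monomial with $m(u)=1$, and iterating the strong-stability axiom starting from any generator of $I$ forces $x_1^d \in G(I)$ (minimality is automatic because every generator has degree $d$), so $m_1=1$; while $m_2 \leq d$ because the degree-$d$ monomials $u$ with $m(u)=2$ are exactly the $d$ monomials $x_1^{d-b}x_2^b$, $1 \leq b \leq d$. Condition (c) is the substantive part, and it is precisely the Macaulay-type inequality established by Murai: one verifies that the sets $A_k = \{v \in K[x_1,\ldots,x_k]_{d-1} : x_k v \in G(I)\}$ are strongly stable in $K[x_1,\ldots,x_k]$ of cardinality $m_k$, and then applies the Macaulay bound in $k$ variables to conclude $|A_{k+1}| \leq |A_k|^{\langle k-1 \rangle}$.

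For the converse (2) $\Rightarrow$ (1), given an $O$-sequence satisfying (a)--(c), the construction in \cite[Proposition 3.8]{Mu} produces explicitly a strongly stable ideal generated in degree $d$ with the prescribed $m_k$'s, by selecting at each level $k$ the $m_k$ lex-largest monomials of the form $x_k v$ with $v \in K[x_1,\ldots,x_k]_{d-1}$. The $O$-sequence hypothesis is exactly what is needed for the resulting collection to be strongly stable, after which the $d$-linear resolution follows automatically from the fact that all generators share the degree $d$. The main obstacle is therefore delegated to Murai: verifying that (a)--(c) translate into the strong stability of the lex-built set.
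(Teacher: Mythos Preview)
Your argument is correct and matches the paper's, which derives the proposition directly from Proposition \ref{bc=bs} and \cite[Proposition 3.8]{Mu} without further elaboration. The extra detail you supply---the elementary checks for (a) and (b) and the piecewise-lexsegment description for the converse---is consistent with this route and correctly defers the substantive inequality (c) to Murai.
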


\begin{discussion}\label{remop}
In \cite[Proposition 3.8]{Mu} is shown the equivalence between point (2) of Proposition \ref{mainlinear} and the existence of a strongly stable ideal $I\subset S$ generated in degree $d$ such that $m_k(I)=m_k$ for all $k=1,\ldots ,n$. Looking carefully at the proof, one can see that, given an $O$-sequence $(m_1,\ldots ,m_n)$, a special strongly stable ideal $I\subset S$ such that $m_k(I)=m_k$ shows up. This special strongly stable ideal, which we are going to define more explicitly, will play a crucial role throughout the paper. 

Given an $O$-sequence $(m_1,\ldots ,m_n)$ such that $m_2\leq d$, set:
\[ V_i = \{\mbox{biggest $m_i$ monomials $u\in S_d$ such that $m(u)=i$}\}\]
for all $i=1,\ldots ,n$. Then the discussed ideal is:
\[I=(\bigcup_{i=1}^n V_i)\subset S.\]
We will refer to $I$ as the {\it piecewise lexsegment} monomial ideal (of type $(d,(m_1,\ldots ,m_n))$). From what said above, it follows that
the piecewise lexsegment of type $(d,(m_1,\ldots ,m_n))$ is strongly stable if and only if $(m_1,\ldots ,m_n)$ is a $O$-sequence such that $m_2\leq d$.
\end{discussion}

Let $I\subset S$ be a strongly stable monomial ideal. Notice that both $I_{\langle j\rangle}$ and $\m I$, where $j$ is a natural number and $\m = (x_1,\ldots, x_n)$ is the graded maximal ideal of $S$, are strongly stable.  For all $j\in \NN$ and $i=1,\ldots ,n$, we define:
\[\mu_{i,j}(I)=m_i(I_{\langle j\rangle}).\]
To know the matrix $(m_{i,j}(I))$ or $(\mu_{i,j}(I))$ are equivalent issues: Indeed, if $J\subset S$ is a strongly stable monomial ideal, then for all $i=1,\ldots ,n$:
\[m_i(\m J)=\sum_{q=1}^im_q(J).\]
Therefore we have the formula:
\begin{equation}\label{mu&m}
m_{i,j}(I)=m_i(I_{\langle j\rangle})-m_i(\m I_{\langle j-1 \rangle})=\mu_{i,j}(I)-\sum_{q=1}^i \mu_{q,j-1}(I)
\end{equation}
that implies that we can pass from the $\mu_{i,j}$'s to the $m_{i,j}$'s. It also follows from this formula that we can do the converse path by induction on $j$, because $\mu_{i,d}(I)=m_{i,d}(I)$ if $d$ is the smallest degree in which $I$ is not zero. Therefore, using Proposition \ref{bc=bs}, to characterize the possible Betti tables of the componentwise linear ideals is equivalent to answer the following question:

\begin{question}
What are the possible matrices $\M(I)=(\mu_{i,j}(I))$ where $I\subset S$ is a strongly stable ideal?
\end{question}

We will refer to $\M=\M(I)$ as the {\it matrix of generators} of the strongly stable ideal $I$. We will feature $\M$ as follows:
\[\mathcal{M}=\left(
                   \begin{array}{cccccc}
                     \mu_{1,1} & \mu_{2,1} & \mu_{3,1} & \cdots & \cdots & \mu_{n,1} \\
                     \mu_{1,2} & \mu_{2,2} & \mu_{3,2} & \cdots & \cdots & \mu_{n,2} \\
                     \mu_{1,3} & \mu_{2,3} & \mu_{3,3} & \cdots & \cdots & \mu_{n,3} \\
                     \vdots & \vdots & \vdots & \cdots & \cdots & \vdots \\
                   \end{array}
                 \right)\]
We can immediately state the following:

\begin{prop}\label{main}
Let $\mathcal{M}=(\mu_{i,j})$ be the matrix of generators of a strongly stable monomial ideal $I\subset S$. Then the following conditions hold:
\begin{itemize}
\item[{\em (i)}] Each non-zero row vector $(\mu_{1,j},\mu_{2,j},\ldots ,\mu_{n,j})$ of $\mathcal{M}$ is an $O$-sequence such that $\mu_{2,j}\leq j$.
\item[{\em (ii)}] For all $i$ and $j$ one has $\mu_{i,j}\geq \sum_{q = 1}^i\mu_{q,j-1}$.
\end{itemize}
\end{prop}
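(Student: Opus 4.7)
The plan is that both parts follow essentially mechanically from material already set up in the preliminaries, with the two key inputs being Proposition~\ref{mainlinear} (for part (i)) and the Eliahou--Kervaire identity \eqref{mu&m} (for part (ii)).

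For part (i), I would fix a degree $j$ with $(\mu_{1,j},\ldots,\mu_{n,j}) \neq 0$ and consider the ideal $I_{\langle j\rangle}$. Since $I$ is strongly stable, so is $I_{\langle j\rangle}$, and it is by construction generated in the single degree $j$. A strongly stable ideal generated in a single degree $d$ has $d$-linear resolution — this is immediate from the Eliahou--Kervaire formula \eqref{eliker}, because $m_{k,e}(I_{\langle j\rangle}) = 0$ for $e\neq j$ forces $\beta_{i,i+e}(I_{\langle j\rangle})=0$ for $e\neq j$. By definition $m_k(I_{\langle j\rangle}) = \mu_{k,j}(I)$ for every $k$, so applying Proposition~\ref{mainlinear} to $I_{\langle j\rangle}$ with $d=j$ yields exactly that $(\mu_{1,j},\ldots,\mu_{n,j})$ is an $O$-sequence with $\mu_{2,j}\leq j$.

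For part (ii), I would simply invoke the identity \eqref{mu&m} established in the discussion preceding the proposition,
\[ m_{i,j}(I) \;=\; \mu_{i,j}(I) - \sum_{q=1}^i \mu_{q,j-1}(I). \]
Since $I$ is stable (being strongly stable), by definition \eqref{defmid} the quantity $m_{i,j}(I)$ counts the minimal monomial generators $u \in G(I)\cap S_j$ with $m(u)=i$, hence is non-negative. Rearranging gives precisely $\mu_{i,j}\geq \sum_{q=1}^i\mu_{q,j-1}$.

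There is no real obstacle: both conditions are straightforward consequences of already proved results. The only things to be careful about are (a) remembering to invoke strong stability of $I_{\langle j\rangle}$ to apply the $O$-sequence characterization of linear-resolution ideals, and (b) using the correct sign convention in \eqref{mu&m} so that non-negativity of the generator counts yields the stated inequality.
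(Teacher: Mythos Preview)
Your proposal is correct and follows exactly the paper's own argument: condition (i) is obtained by applying Proposition~\ref{mainlinear} to the strongly stable ideal $I_{\langle j\rangle}$, which has a $j$-linear resolution, and condition (ii) is an immediate consequence of the identity \eqref{mu&m} together with the non-negativity of $m_{i,j}(I)$. Your write-up simply spells out the details that the paper leaves implicit.
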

\begin{proof}
Condition (i) follows from Proposition \ref{mainlinear} since $I_{\langle j\rangle}$ has a $j$-linear resolution for all $j$ greater than or equal to the lower degree in which $I$ is not zero. Condition  (ii) follows from \eqref{mu&m}.
\end{proof}

Notice that the Noetherianity of $S$ (or if you prefer conditions (i) and (ii) of Proposition \ref{main}) implies that there exists $m\in\NN$ such that $\mu_{i,j}(I) = \sum_{q = 1}^i\mu_{q,j-1}(I)$ for all $j>m$ and $i\in\{1,\ldots ,n\}$. So, though $\M$ has infinitely many rows, the relevant ones are just a finite number, and in the examples we will write just them.

One may expect that the conditions described in Proposition \ref{main} are sufficient. But this is not the case at all:

\begin{example}\label{obstructionscomplin}
One obstruction is illustrated already by Remark \ref{murai}: Consider the matrix
\[\mathcal{M}=\left(
                   \begin{array}{cccc}
                     0 & 0 & 0 & 0 \\
                     \vdots & \vdots & \vdots & \vdots \\
                     0 & 0 & 0 & 0 \\
                     1 & d & 0 & 0 \\
                     1 & d+1 & d+1 & k
                   \end{array}
                 \right)\]
where the first nonzero row from the top is the $d$th and $d+1<k\leq (d+1)^{\langle 2\rangle}$. Such a matrix clearly satisfies the necessary conditions of Proposition \ref{main}. However, if there existed a strongly stable ideal $I\subset K[x_1,x_2,x_3,x_4]$ with matrix of generators $\M$, then it would satisfy $m_1(I)=1$, $m_2(I)=d$, $m_3(I)=0$ and $m_4(I)=k-d-1>0$, a contradiction to Remark \ref{murai}. The first matrix of this kind is:
\[\mathcal{M}=\left(
                   \begin{array}{cccc}

                     0 & 0 & 0 & 0 \\
                     1 & 2 & 0 & 0 \\
                     1 & 3 & 3 & 4
                   \end{array}
                 \right).\]
The explained obstruction gives rise to a class of counterexamples. However, such a class does not fill the gap between the existence of a strongly stable ideal with matrix of generators $\M$ and the necessary conditions of Proposition \ref{main}. Let us look at the following matrix.
\[\mathcal{M}=\left(
                   \begin{array}{cccc}
                     0 & 0 & 0 & 0 \\
                     0 & 0 & 0 & 0 \\
                     0 & 0 & 0 & 0 \\
                     0 & 0 & 0 & 0 \\
                     1 & 3 & 2 & 2 \\
                     1 & 4 & 6 & 9
                   \end{array}
                 \right).\]
One can check that the necessary conditions described in Proposition \ref{main} hold. However one can show that there is no strongly stable monomial ideal $I\subset K[x_1,\ldots ,x_4]$ with $\mathcal{M}$ as matrix of generators. Notice that such an ideal would have $m_1(I)=1$, $m_2(I)=3$, $m_3(I)=2$ and $m_4(I)=3$, which does not contradict Remark \ref{murai}.
\end{example}

\begin{example}\label{nagelroemer}
Obviously the property of having linear resolution can be detected looking at the graded Betti numbers. In the following example we show that this is not anymore true for componentwise linear ideals, and this strengthens the impression that to give a complete characterization of the possible graded Betti  numbers of a componentwise linear ideal is probably a hard task. More precisely, we are going to exhibit two ideals $I$ and $J$, one componentwise linear and one not, with the same Betti tables. This answers negatively a question raised in \cite[Question 1.1]{NR}.

Consider the ideals of $K[x_1,x_2,x_3]$:
\[I = (x_1^4,  \    x_1^3x_2 , \    x_1^2x_2^2, \    x_1x_2^3,  \   x_2^4,  \   x_1^3x_3, \    x_1^2x_2x_3^2,  \   x_1^2x_3^3, \    x_1x_2^2x_3^2)\]
and
\[J = (x_1^4,   \  x_1^3x_2 ,    \ x_1^2x_2^2,   \  x_1^3x_3,   \  x_1x_2^2x_3,   \  x_1x_2x_3^2,   \  x_1x_2^4,   \  x_1^2x_3^3, \    x_2^4x_3).\]
Notice that $I$ and $J$ are generated in degrees $4$ and $5$. By CoCoA \cite{cocoa} one can check that $I$ and $J$ have the same Betti table, namely:
\[\left(
                   \begin{array}{cccc}
                     0 & 0 & 0 \\
                     0 & 0 & 0 \\
                     0 & 0 & 0 \\
                     6 & 6 & 1 \\
                     3 & 6 & 3
                   \end{array}
                 \right).\]
One can easily check that $I$ is strongly stable, so in particular it is componentwise linear. On the contrary $J$ is not componentwise linear, since $J_{\langle 4\rangle}=(x_1^4,     x_1^3x_2 ,     x_1^2x_2^2,     x_1^3x_3,     x_1x_2^2x_3,     x_1x_2x_3^2)$, as one can check by CoCoA,  has not a $4$-linear resolution.
\end{example}

\vskip.3mm

We are going  to explain some reasons why the conditions of Proposition \ref{main} for a matrix $\mathcal{M}$ are in general not sufficient to have a strongly stable ideal corresponding to it. By the discussion after Proposition~\ref{mainlinear}, we know that  for a given
sequence $(m_1,\ldots ,m_n)$ of integers,  there exists a strongly stable monomial ideal $J$ generated
in degree $d$ such that $m_i(J) = m_i$ if and only if the piecewise lexsegment ideal of type $(d,(m_1,\ldots ,m_n))$ is strongly stable. Unfortunately, even
if $J$ is a piecewise lexsegment, the ideal  $\m J$ is not necessarily a piecewise lexsegment. For instance, keeping in mind the last matrix in Example \ref{obstructionscomplin}, the piecewise
lexsegment ideal of type $(5,(1,3,2,2))$ is:
\[J = (x_1^5, x_1^4x_2, x_1^3x_2^2,x_1^2x_2^3, x_1^4x_3, x_1^3x_2x_3, x_1^4x_4, x_1^3x_2x_4).\]
However $u=x_1^3x_3^3\notin \m J$, whereas $v= x_1^2 x_2^3x_3\in \m J$. Since $v$ is lexicographically
smaller than $u$ and $m(u)=m(v)=3$, $\m J$ is not a piecewise lexsegment ideal.
This fact does not make any troubles when the number of variables is at most three, as we can see later. To see this, we need the following more general proposition.


\begin{prop}\label{suffcomplin}
Let $\mathcal{M}=(\mu_{i,j})$ be a matrix. Then $\mathcal{M}$ is the matrix  of generators of a strongly stable monomial ideal $I\subset S$, provided that the following conditions hold:

\begin{enumerate}
\item[{\em (1)}]
Each nonzero row vector $(\mu_{1,j},\ldots ,\mu_{n,j})$ of $\mathcal{M}$ is an $O$-sequence with $\mu_{2,j}\leq j$.
\item[{\em (2)}]
For all  $i\in\{1,\ldots ,n\}$ one has $\mu_{i,j}\geq \sum_{q=1}^i\mu_{q,j-1}$.
\item[{\em (3)}]
If $m\in\NN$ is the least number such that $\mu_{i,h}= \sum_{q=1}^i\mu_{q,h-1}$ for each $h>m$, then for each $j\leq m$ there exists a strongly stable ideal $B(j)\subset S$ generated in degree $j$ such that $m_i(B(j))=\mu_{i,j}$ for all $i=1,\ldots ,n$ and $B(j)\cap K[x_1,\ldots ,x_{n-1}]_{j+1}$ is a piecewise lexsegment monomial space.
\end{enumerate}
\end{prop}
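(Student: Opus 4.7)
My plan is to build $I$ inductively, degree by degree. At each step I specify $I_j \subset S_j$ as a strongly stable monomial space with $m_i(I_j) = \mu_{i,j}$ for every $i$ and with $\m I_{j-1} \subseteq I_j$. Then $I := \bigoplus_j I_j$ is automatically a strongly stable ideal of $S$ with $\M(I) = \M$: the inclusions $\m I_{j-1} \subseteq I_j$ make $I$ an ideal, strong stability of each $I_j$ lifts to $I$, and $\mu_{i,j}(I) = m_i(I_j) = \mu_{i,j}$ by construction. For the base case, at the smallest $d$ with a nonzero row I set $I_d := B(d)_d$, which is strongly stable with the right per-$i$ counts thanks to condition~(3).

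For the inductive step at degree $j > d$, two cases arise. If $j > m$, the definition of $m$ gives $\mu_{i,j} = \sum_{q=1}^i \mu_{q,j-1}$, so $I_j := \m I_{j-1}$ works automatically: the identity $m_i(\m V) = \sum_{q=1}^i m_q(V)$ underlying \eqref{mu&m} produces the right counts, and $\m V$ inherits strong stability from $V$. The substantive case is $d < j \leq m$, where one must enlarge $\m I_{j-1}$ by $N_i := \mu_{i,j} - \sum_{q=1}^i \mu_{q,j-1} \geq 0$ new monomials of degree $j$ with $m(u) = i$, for each $i$, while preserving strong stability. The ideal $B(j)$ from condition~(3) has the correct totals $\mu_{i,j}$, but in general $B(j)_j$ does not contain $\m I_{j-1}$, so taking $I_j := B(j)_j$ is not available.

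To carry out this delicate step, my plan is to maintain as an inductive invariant that $I_{j-1} \cap K[x_1,\ldots,x_{n-1}]$ is a piecewise lexsegment in $K[x_1,\ldots,x_{n-1}]_{j-1}$. This invariant forces $\m I_{j-1} \cap K[x_1,\ldots,x_{n-1}]$ to be a piecewise lexsegment in $K[x_1,\ldots,x_{n-1}]_j$ of the exact type prescribed by condition~(3) applied to $B(j-1)$. I would then take $I_j \cap K[x_1,\ldots,x_{n-1}]$ to be the piecewise lexsegment in $K[x_1,\ldots,x_{n-1}]_j$ of type $(\mu_{1,j},\ldots,\mu_{n-1,j})$, which exists by condition~(1) and contains $\m I_{j-1} \cap K[x_1,\ldots,x_{n-1}]$ by the count inequalities of condition~(2). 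The remaining $x_n$-divisible monomials of $I_j$ are recovered via the colon $(I_j : x_n) \subset S_{j-1}$ by applying the same construction in one fewer variable. The main obstacle is verifying that the piecewise lexsegment invariant really propagates through the construction and that the $x_n$-free and $x_n$-divisible parts of $I_j$ can be glued compatibly with strong stability of $I_j$ as a whole; once this is done, $\M(I) = \M$ and $\m I_j \subseteq I_{j+1}$ come for free.
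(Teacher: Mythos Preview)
Your overall framework coincides with the paper's: build $I$ degree by degree, maintain as invariant that the restriction $I_j\cap K[x_1,\ldots,x_{n-1}]$ is the piecewise lexsegment of type $(j,(\mu_{1,j},\ldots,\mu_{n-1,j}))$, and use condition~(3) (via $B(j-1)$) to guarantee that this invariant survives multiplication by~$\m$. That part of your plan is correct and is exactly how the paper proceeds.

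The gap is precisely the step you yourself flag as the ``main obstacle'': choosing the $x_n$-divisible part of $I_j$ and verifying strong stability of the whole. Your suggestion to handle $(I_j:x_n)$ ``by applying the same construction in one fewer variable'' does not go through: there is no recursive set of hypotheses~(1)--(3) available in $n-1$ variables (condition~(3) gives you the ideals $B(j)\subset S$, not analogous ideals in $K[x_1,\ldots,x_{n-1}]$), and in any case strong stability of $I_j$ is not simply the conjunction of strong stability of the $x_n$-free part and of the colon. The paper resolves this step differently and quite concretely. It takes, for each $i$, the $m_{i,j}:=\mu_{i,j}-\sum_{q\le i}\mu_{q,j-1}$ lex-largest monomials $u\in S_j\setminus I(j-1)$ with $m(u)=i$, and adds them all at once. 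For $i<n$ this reproduces the piecewise lexsegment in $K[x_1,\ldots,x_{n-1}]$, as you anticipated. For $i=n$ the key observation is this: if $u$ is one of the newly added monomials with $m(u)=n$, then \emph{every} monomial $v\in S_j$ with $m(v)=n$ and $v\ge u$ already lies in $I_j$ (either it was in $\m I_{j-1}$, or it is one of the new monomials, since those were chosen lex-largest outside $\m I_{j-1}$). Hence the set
\[
T=\{\,v\in I_j:\ m(v)<n\ \text{or}\ v\ge u\,\}
\]
is exactly the piecewise lexsegment of type $(j,(\mu_{1,j},\ldots,\mu_{n-1,j},a))$ for some $a\le\mu_{n,j}$. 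Since $(\mu_{1,j},\ldots,\mu_{n,j})$ is an $O$-sequence by~(1), so is the truncated sequence with~$a$, and therefore $T$ is strongly stable; in particular $(u/x_i)x_k\in T\subset I_j$ for all $k<i$ with $x_i\mid u$. This is the missing piece in your outline, and it is what makes the gluing work without any recursion in fewer variables.
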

\begin{proof}
Let $d$ be the least natural number such that the row vector $(\mu_{1,d},\ldots ,\mu_{n,d})$ is nonzero. We claim to have built a strongly stable ideal $I(j)\subset S$ such that $\mu_{i,k}(I(j))=\mu_{i,k}$ for any $k\leq j$ and $I(j)_{\langle j\rangle}\cap K[x_1,\ldots ,x_{n-1}]=B(j)\cap K[x_1,\ldots ,x_{n-1}]$. If $j = m$, then the desired ideal is $I=I(m)$. If not, however we can assume $j\geq d$ ($I(d)=B(d)$). We set \[m_{i,j+1}=\mu_{i,j+1}- \sum_{q=1}^i\mu_{q,j}.\]
Let $L(j+1)$ be the ideal generated by  the biggest $m_{i,j+1}$ ($i=1,\ldots ,n$) monomials $u\in S_{j+1}\setminus I(j)$ such that $m(u) = i$ (they exist thanks to condition (1)). Set $I(j+1)=I(j)+L(j+1)$. Clearly the first $j+1$ rows of the matrix of generators of $I(j+1)$ coincide with the ones of $\mathcal{M}$. Then notice that $I(j+1)_{\langle j+1\rangle}\cap K[x_1,\ldots ,x_{n-1}]$ is the piecewise lexsegment of type $(j+1,(\mu_{1,j+1},\cdots,\mu_{n-1,j+1}))$ by construction. Because $B(j+1)\cap K[x_1,\ldots ,x_{n-1}]_{j+2}$ is a piecewise lexsegment monomial space, $B(j+1)\cap K[x_1,\ldots ,x_{n-1}]$ is forced to be the piecewise lexsegment of type $(j+1,(\mu_{1,j+1},\cdots,\mu_{n-1,j+1}))$. So we get the equality:
\[I(j+1)_{\langle j+1\rangle}\cap K[x_1,\ldots ,x_{n-1}]=B(j+1)\cap K[x_1,\ldots ,x_{n-1}].\]
To conclude the proof, we have to show that $I(j+1)$ is strongly stable. This reduces to show that, if $u\in I(j+1)$ is a monomial of degree $j+1$ with $m(u)=n$, then $(u/x_i)x_k$ belongs to $I(j+1)$ for all $1\leq k<i\leq n$ such that $x_i\mid u$. We consider two cases. If $u\in I(j)$ we are done,  because $I(j)\subset I(j+1)$ is strongly stable. If $u\in L(j+1)$, then we consider the monomial ideal
\[T(j+1)=(v\in G(I(j+1)_{\langle j+1\rangle}):\  \ m(v)<n \ {\rm or}\ v\geq u)\subset I(j+1).\]
Observe that $T(j+1)$ is a piecewise lexsegment ideal of type $(j+1,(\mu_{1,j+1},\ldots,\mu_{n-1,j+1}, a))$, where $a\leq \mu_{n,j+1}$. Since $(\mu_{1,j+1},\ldots,\mu_{n-1,j+1}, a)$ is an $O$-sequence with $\mu_{2,j+1}\leq j+1$, it follows that  $T(j+1)$ is strongly stable by the discussion after Proposition \ref{mainlinear}. Thus for each $1\leq k<i\leq n$ such that $x_i|u$, we have that  $(u/x_i)x_k\in T(j+1)\subset I(j+1)$.
%
%
%
%
%
%
\end{proof}

\begin{corollary}\label{3variables}
Let $\mathcal{M}=(\mu_{i,j})$ be a  matrix with $3$ columns. Then $\mathcal{M}$ is the matrix  of generators of a strongly stable monomial ideal $I\subset K[x_1,x_2,x_3 ]$ if and only if  the following conditions hold:
\begin{enumerate}
\item[{\em (1)}]  Each non-zero row vector $(\mu_{1,j},\mu_{2,j},\mu_{3,j})$ of $\mathcal{M}$ is an $O$-sequence with $\mu_{2,j}\leq j$.
\item[{\em (2)}]  For all $j\in \NN$ one has $\mu_{2,j}\geq \mu_{1,j-1}+\mu_{2,j-1}$ and $\mu_{3,j}\geq \mu_{1,j-1}+\mu_{2,j-1}+\mu_{3,j-1}$.
\end{enumerate}
\end{corollary}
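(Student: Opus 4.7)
The necessity of conditions (1) and (2) is precisely the content of Proposition \ref{main}, so the entire work lies in the sufficiency direction. My plan is to reduce the statement to Proposition \ref{suffcomplin} by checking that its hypothesis (3) is automatically satisfied in three variables; conditions (1) and (2) of Proposition \ref{suffcomplin} are verbatim our (1) and (2).

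For each $j$ in the finite range where the $j$th row of $\mathcal{M}$ does not yet stabilize, I would define $B(j)$ to be the piecewise lexsegment ideal of type $(j,(\mu_{1,j},\mu_{2,j},\mu_{3,j}))$. By assumption (1), $(\mu_{1,j},\mu_{2,j},\mu_{3,j})$ is an $O$-sequence with $\mu_{2,j}\leq j$, so by the discussion after Proposition \ref{mainlinear} the ideal $B(j)$ is strongly stable, and by construction $m_i(B(j))=\mu_{i,j}$ for $i=1,2,3$.

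The key point is to verify that $B(j)\cap K[x_1,x_2]_{j+1}$ is a piecewise lexsegment monomial space. This is where the restriction to three variables pays off: in the polynomial ring $K[x_1,x_2]$, every strongly stable monomial space $V$ of a given degree $d$ is automatically a lexsegment, hence a piecewise lexsegment. Indeed, if $x_1^a x_2^{d-a}\in V$ with $a<d$, the strong stability condition (applied to the variable $x_2$) gives $x_1^b x_2^{d-b}\in V$ for every $b\geq a$, so $V$ equals the set of the top $\dim_K V$ monomials of degree $d$. Since $B(j)\cap K[x_1,x_2]_{j+1}$ is the degree $(j+1)$ piece of a strongly stable ideal restricted to the subring $K[x_1,x_2]$, it is a strongly stable monomial space in two variables and thus a piecewise lexsegment.

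Hence all three hypotheses of Proposition \ref{suffcomplin} hold, yielding a strongly stable monomial ideal $I\subset K[x_1,x_2,x_3]$ with $\mathcal{M}(I)=\mathcal{M}$. The only genuinely delicate point is the two-variable lexsegment observation above; in more variables the analogous statement fails (as already foreshadowed by the example of $\mathfrak{m}J$ not being piecewise lexsegment for $J$ piecewise lexsegment in four variables), which is precisely why the argument does not extend past $n=3$.
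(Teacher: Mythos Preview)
Your proof is correct and follows the same approach as the paper: necessity from Proposition~\ref{main}, sufficiency by verifying condition~(3) of Proposition~\ref{suffcomplin} via the observation that strongly stable monomial spaces in $K[x_1,x_2]$ are automatically (piecewise) lexsegments. You have simply spelled out the construction of $B(j)$ and the two-variable lexsegment argument more explicitly than the paper does.
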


\begin{proof}
The conditions are necessary from Proposition \ref{main}. Furthermore, since an ideal $I\subset K[x_1,x_2]$ generated in one degree is piecewise lexsegment if and only if it is strongly stable, we automatically have condition (3) of Proposition \ref{suffcomplin}.
%
%
%
\end{proof}

\vskip .3mm

Although the complete characterization of the matrix of generators of an arbitrary strongly stable ideal seems to be very complicated, based on the fact that the lexsegment property of an ideal is preserved under multiplication by the maximal ideal $\m$,  one may expect a characterization for the matrix of generators of lexsegment ideals.  For answering this question, first we define the concept of a $d$-lex sequence.

\begin{definition}
A  sequence of non-negative integers $m_1,\cdots, m_n$ is called a $d$-lex sequence, if there exists a lexsegment ideal $L\subset S$ generated in degree $d$ such that $m_i(L)=m_i$ for all $i$.
\end{definition}

Because if $I\subset S$ is a lexsegment ideal, then $\m I$ is still a lexsegment ideal, we clearly have that $\mathcal{M}=(\mu_{i,j})$ is the matrix  of generators of a lexsegment ideal
 if and only if the following conditions hold:
\begin{enumerate}
\item[(1)] Each non-zero row vector $(\mu_{1,j},\mu_{2,j},\ldots ,\mu_{n,j})$ of $\mathcal{M}$ is a $j$-lex sequence.
\item[(2)]  For all $i$ and $j$ one has $\mu_{i,j}\geq \sum_{q=1}^i\mu_{q,j-1}$.
\end{enumerate}
Therefore to characterize the matrix of generators of lexsegment ideals we need to characterize arbitrary $d$-lex sequences. To do this, we have to recall the definition of  the natural decomposition of the complement set of monomials belonging to a lexsegment ideal generated in a fixed  degree. In what follows we denote by $[x_t,\ldots , x_n]_r$ ($1\leq t\leq n$) the set of all monomials of degree $r$ in the variables $x_t,\ldots , x_n$.

\begin{definition}
Let $u=x_{j(1)}\ldots x_{j(d)}\in S_d$ ($1\leq j(1)\leq \cdots \leq j(d)\leq n$) be a monomial and set $L_{<u}=\{ v\in S_d\ | \ v< u\}$. Following the method described in \cite[page 159]{BH} (where $L_{<u}$ is denoted by $\mathcal{L}_u$) we can partition the set $L_{<u}$ as:
\[L_{<u}=\bigcup_{i=1}^d[x_{j(i)+1},\ldots , x_n]_{d-i+1}\cdot x_{j(1)}\cdots x_{j(i-1)},\]
which is called the {\it natural decomposition} of $L_{<u}$.
\end{definition}

Before proving the next result, notice that the powers of the maximal ideal are lexsegment ideals, and the following formula holds for their $d$-lex sequences:
\begin{equation}\label{powerm}
m_i(\m^d)=\displaystyle {i+d-2\choose d-1}.
\end{equation}
%

\begin{prop}\label{d-lex}
Let $m_1,\ldots, m_n$ be a sequence of natural numbers and let $\mu=\sum_{i=1}^n m_i$. Suppose that
$$\ell={n+d-1\choose d}-\mu=\sum_{j=1}^d{k(j)\choose j}$$
is the $d$-th Macaulay representation of $\ell$. Then $m_1,\cdots, m_n$ is a $d$-lex sequence, if and only if
$$m_i={i+d-2\choose d-1}-\sum_{j=1}^d{k(j)-n+i-1\choose j-1}.$$
\end{prop}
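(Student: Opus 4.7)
The plan is to observe that any lexsegment ideal $L\subset S$ generated in degree $d$ has $L\cap S_d=\{v\in S_d:v\geq u\}$ for a well-defined smallest generator $u=x_{j(1)}\cdots x_{j(d)}$ (with $1\leq j(1)\leq\cdots\leq j(d)\leq n$). Since $L$ is generated in degree $d$, $G(L)=L\cap S_d$, so both $\mu=|G(L)|$ and $\ell=|L_{<u}|$ determine $u$ uniquely. Hence both directions of the equivalence reduce to computing $m_i(L)$ from $u$ and checking that the answer matches the displayed formula.

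First I would translate the natural decomposition of $L_{<u}$ into the $d$-th Macaulay representation of $\ell$. Counting the pieces $[x_{j(i)+1},\ldots,x_n]_{d-i+1}\cdot x_{j(1)}\cdots x_{j(i-1)}$ gives $\ell=\sum_{i=1}^d\binom{n-j(i)+d-i}{d-i+1}$; after the substitution $r=d-i+1$ this becomes $\sum_{r=1}^d\binom{k(r)}{r}$ with $k(r):=n-j(d-r+1)+r-1$. The chain $j(1)\leq\cdots\leq j(d)\leq n$ translates into $0\leq k(1)<k(2)<\cdots<k(d)$, so this is indeed the Macaulay representation of $\ell$, and the bijection $u\leftrightarrow(k(1),\ldots,k(d))$ takes the explicit form $j(s)=n-k(d-s+1)+d-s$.

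Next I would compute $m_i(L)$. A monomial $v\in S_d$ satisfies $m(v)=i$ precisely when $v\in K[x_1,\ldots,x_i]_d\setminus K[x_1,\ldots,x_{i-1}]_d$, giving $\binom{i+d-2}{d-1}$ monomials in total; since $L$ is generated in degree $d$, one has $m_i(L)=\binom{i+d-2}{d-1}-|L_{<u}\cap\{v:m(v)=i\}|$. Restricting each piece of the natural decomposition to $K[x_1,\ldots,x_i]$ yields $|L_{<u}\cap K[x_1,\ldots,x_i]_d|=\sum_{s:\,j(s)<i}\binom{i-j(s)+d-s}{d-s+1}$, and an analogous formula holds for $i-1$. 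Subtracting the two and telescoping via Pascal's rule should give $|L_{<u}\cap\{m(v)=i\}|=\sum_{s=1}^d\binom{i-j(s)+d-s-1}{d-s}$, the sum extending to all $s$ because the summands with $j(s)\geq i$ vanish identically. Rewriting through the previous bijection converts this into $\sum_{j=1}^d\binom{k(j)-n+i-1}{j-1}$, matching the statement.

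For the sufficiency direction, starting from a sequence $(m_1,\ldots,m_n)$ satisfying the formula, I would set $\mu=\sum m_i$ and $\ell=\binom{n+d-1}{d}-\mu$, extract $k(1)<\cdots<k(d)$ from its Macaulay representation, define $j(s)=n-k(d-s+1)+d-s$, and take $L$ to be the lexsegment ideal generated by $\{v\in S_d:v\geq x_{j(1)}\cdots x_{j(d)}\}$. The inequalities $1\leq j(1)\leq\cdots\leq j(d)\leq n$ follow from $0\leq k(1)<\cdots<k(d)\leq n+d-2$, where the upper bound is a consequence of $\mu\geq 1$ (the case $\mu=0$ being trivial). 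Then $m_i(L)=m_i$ by the computation above. The main obstacle will be the combinatorial bookkeeping in the telescoping step, in particular the boundary cases $j(s)=i-1$ and $j(s)\geq i$, where one has to verify that the relevant binomial coefficients vanish with the right conventions so that the single compact formula in the statement emerges.
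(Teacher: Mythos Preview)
Your proposal is correct and follows essentially the same route as the paper: both arguments identify $u$ via $\mu$, use the natural decomposition of $L_{<u}$ to read off the Macaulay representation of $\ell$ (with the same substitution $k(r)=n-j(d-r+1)+r-1$), and then count monomials of $L_{<u}$ with $m(v)=i$ piece by piece. The only cosmetic difference is that the paper computes $m_i$ of each piece directly via formula~\eqref{powerm}, whereas you obtain the same binomial $\binom{i-j(s)+d-s-1}{d-s}$ by restricting to $K[x_1,\ldots,x_i]$ and telescoping with Pascal's rule; the two computations are equivalent.
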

\begin{proof}
The sequence $m_1,\ldots, m_n$ is a $d$-lex sequence if and only if $I_u=(L_{\geq u})$ satisfies $m_i(I_u)=m_i$ for all $i=1,\ldots ,n$, where $u$ is the $\mu$th biggest monomial of degree $d$. Let us write $u=x_{j(1)}\cdots x_{j(d)}, 1\leq j(1)\leq \cdots \leq j(d)\leq n$. By the natural decomposition of $L_{<u}$ we have:
\[\ell=|L_{<u}|=\sum_{p=1}^d \dim_K[x_{j(i)+1},\ldots , x_n]_{d-p+1}=\sum_{p=1}^d{n-j(p)+d-p\choose d-p+1}.\]
Setting $t=d-i+1$ and $k(t)=n-j(d-t+1)+t-1$, we have that $\sum_{t=1}^d{k(t)\choose t}$ is the $d$th Macaulay representation of $\ell$. The natural decomposition of $L_{<u}$ and \eqref{powerm} show that
\begin{eqnarray*}
m_i((L_{<u}))=
\sum_{t=1}^d m_i(x_{j(d-t+1)+1},\ldots , x_n)^{t}=\sum_{t=1}^d{i-j(d-t+1)+t-2\choose t-1}=\sum_{t=1}^d{k(t)-n+i-1\choose t-1}.
\end{eqnarray*}
Because
\[m_i(I_u)=m_i(\m^d)-m_i((L_{<u})),\]
we get the conclusion thanks to \eqref{powerm}.
%
%
%
%
%
%
%
%
%
\end{proof}


We recall that a homogeneous ideal $I\subset S$ is said to be {\it Gotzmann} if the number of minimal generators of $\mm I_{\langle j\rangle}$ is the smallest possible for every $j\in\NN$, namely is equal to:
\[\binom{n+j}{j+1}-\left(\binom{n+j-1}{j}-\mu_j\right)^{\langle j\rangle},\]
where $\mu_j$ is the number of minimal generators of $I_{\langle j\rangle}$. The graded Betti numbers of a Gotzmann ideal coincide with its associated lexsegment ideal, see \cite{HH}. Therefore Proposition \ref{d-lex} characterizes also the graded Betti numbers of Gotzmann ideals.
%
%
%
%
%
%
%
%
%
%
%

\section{The possible extremal Betti numbers of a graded ideal}\label{secextremal}

For a fixed $\ell\in\{1,\ldots ,n\}$, $d\in \NN$ and $k\leq \binom{\ell+d-2}{\ell-1}$, we denote by $u(\ell,k,d)$ the $k$th biggest monomial $u\in S_d$ such that $m(u)=\ell$. Or, equivalently, $x_{\ell}$ times the $k$th biggest monomial in $K[x_1,\ldots ,x_{\ell}]_{d-1}$. By $U(\ell ,k,d)$ we denote the ideal of $S$ generated by the set $L_{\geq u(\ell , k, d)}\cap K[x_1,\ldots ,x_{\ell}]$. Notice that $U(\ell ,k,d)$ is not a lexsegment in $S$. However, it is the extension of a lexsegment in $K[x_1,\ldots ,x_{\ell}]$. Furthermore, $U(\ell ,k,d)$ is obviously a piecewise lexsegment in $S$. In this section we need to introduce the following definition: A monomial ideal $I\subset S$ generated in one degree is called {\it piecewise lexsegment up to $\ell$} if $I\cap K[x_1,\ldots ,x_{\ell}]\subset K[x_1,\ldots ,x_{\ell}]$ is piecewise lexsegment.

\begin{remark}\label{keyextremal1}
Notice that, for all $q\in\NN$, denoting by $\mm\subset S$ the maximal irrelevant ideal, $\mm^qU(\ell ,k,d)\cap K[x_1,\ldots,x_{\ell}]$ is equal to $U(\ell ,m_{\ell}(\mm^q U(\ell ,k,d)),d+q))\cap K[x_1,\ldots ,x_{\ell}]$. In particular, $\mm^q U(\ell ,k,d)$ is a piecewise lexsegment up to $\ell$.
\end{remark}

\begin{lemma}\label{keyextremal2}
The ideal $U(\ell ,k,d)\subset S$ is the smallest strongly stable ideal containing the biggest $k$ monomials $u_i\in S_d$ such that $m(u_i)=\ell$ for all $i=1,\ldots ,k$.
\end{lemma}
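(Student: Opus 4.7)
The plan is to verify three things: (a) $U(\ell,k,d)$ is strongly stable; (b) it contains the $k$ biggest monomials $u_1,\ldots,u_k$ of degree $d$ with $m=\ell$; and (c) any strongly stable ideal $J\subset S$ containing $u_1,\ldots,u_k$ also contains $U(\ell,k,d)$. Parts (a) and (b) are routine. The generating set $G=L_{\geq u_k}\cap K[x_1,\ldots,x_\ell]$ of $U(\ell,k,d)$ is a lexsegment in $K[x_1,\ldots,x_\ell]$, hence closed under Borel moves inside that subring; to extend strong stability to $S$, write any element as $wg$ with $g\in G$ and $w$ a monomial of $S$, and apply the move either to $w$ (stays a multiple of $g$) or to $g$ (the latter case stays inside $K[x_1,\ldots,x_\ell]$ because $x_j\mid g$ forces $j\leq\ell$, and Borel moves only increase the lex order). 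The containment in (b) is immediate since each $u_i$ lies in $K[x_1,\ldots,x_\ell]_d$ with $u_i\geq u_k$ in lex.

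The substantive step is (c). My plan is to show that every generator $v\in G$ satisfies $v\succeq_B u_j$ in the Borel partial order for some $j\leq k$; since $J$ is strongly stable and contains $u_j$, this will force $v\in J$. Here $v\succeq_B u$ means that after sorting the variable indices of each monomial in non-decreasing order, the sorted sequence for $v$ is componentwise $\leq$ that of $u$. If $m(v)=\ell$, then $v$ is itself one of $u_1,\ldots,u_k$. Otherwise define $v'=(v/x_{m(v)})x_\ell$; then $m(v')=\ell$, and $v\succeq_B v'$ by construction. The key sub-claim is that $v'\geq u_k$ in lex, so that $v'\in\{u_1,\ldots,u_k\}$.

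To prove this sub-claim, let $p$ be the first position at which the exponent vectors of $v$ and $u_k$ differ, so $v_p>u_{k,p}$. Since $v_q=0$ for $q>m(v)$, necessarily $p\leq m(v)$. If $p<m(v)$ then $v'$ and $v$ agree at position $p$, so $v'_p>u_{k,p}$ and $v'>u_k$ in lex. If $p=m(v)$, write $v_p=u_{k,p}+c$ with $c\geq 1$: when $c\geq 2$ one has $v'_p=u_{k,p}+c-1>u_{k,p}$, again giving $v'>u_k$; when $c=1$, equating the degrees of $v$ and $u_k$ yields $\sum_{q>p}u_{k,q}=1$, which combined with $u_{k,\ell}\geq 1$ forces $u_{k,\ell}=1$ and $u_{k,q}=0$ for $p<q<\ell$, whence $v'=u_k$. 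The delicate part is this last subcase: the rigidity of the degree equation leaves exactly the equality $v'=u_k$, which is what is needed to close the argument.
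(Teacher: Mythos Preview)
Your proof is correct and follows essentially the same strategy as the paper's: both arguments first note that $U(\ell,k,d)$ is strongly stable and contains the $u_i$, and then, for the minimality, take a generator $v$ with $m(v)<\ell$ and exhibit a single Borel anti-move producing some $v'\in\{u_1,\ldots,u_k\}$. The only difference is bookkeeping: the paper organizes the case split via the set $F=\{i:a_i>b_i\}$ and, when $|F|>1$, moves an $x_j$ with $j\in F$ (and when $|F|=1$ performs several moves at once), whereas you always move a copy of $x_{m(v)}$ to $x_\ell$ and split according to whether the first discrepancy index $p$ equals $m(v)$. Your uniform choice $v'=(v/x_{m(v)})x_\ell$ is arguably tidier, and your endgame in the subcase $p=m(v)$, $c=1$ (where degree counting forces $v'=u_k$ exactly) matches the paper's $|F|=1$ case with $b_\ell=1$.
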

\begin{proof}
Let $J\subset S$ be the smallest strongly stable ideal containing the biggest $k$ monomials $u_i\in S_d$ such that $m(u_i)=\ell$ for all $i=1,\ldots ,k$. Being the extension of a lexsegment, $U(\ell ,k,d)$ is strongly stable, so that $J\subset U(\ell ,k,d)$. Therefore, let us show the inclusion $U(\ell ,k,d)\subset J$. Let $u$ be a minimal monomial generator of $U(\ell ,k,d)$. So $u$ has degree $d$ and $m(u)\leq \ell$. Actually, we can assume $m(u)<\ell$, otherwise there is nothing to prove. So let us write:
\[u=x_1^{a_1}\ldots x_{\ell-1}^{a_{\ell -1}}.\]
By definition $u> u(\ell,k,d)=x_1^{b_1}\ldots x_{\ell}^{b_{\ell}}$. Set $F=\{i:a_i>b_i\}$. Because $u> u(\ell,k,d)$, we have $F\neq \emptyset$ and $a_j=b_j$ for all $j<i_0=\min\{i:i\in F\}$. If $|F|=1$, then $a_i=b_i$ for all $i_0<i<\ell$ and $b_{\ell}=a_{i_0}-b_{i_0}$, so that $u=x_{i_0}^{a_{i_0}-b_{i_0}}\cdot (u(\ell,k,d)/x_{\ell}^{a_{i_0}-b_{i_0}})\in J$. If $|F|>1$, take $j>i_0$ such that $a_j>b_j$. The monomial $u'=x_{\ell}\cdot (u/x_j)$ is such that $u'>u(\ell ,k,d)$ and $m(u')=\ell$. Therefore $u'\in J$, so that $u=x_j\cdot (u'/x_{\ell})$ belongs to $J$ too.
\end{proof}

The above lemma allows us to characterize the possible extremal Betti numbers of a homogeneous ideal in a polynomial ring. To this aim, we start with a discussion. To $U(\ell ,k,d)$ we can associate the numerical sequence $(m_1,\ldots ,m_{\ell})$ where $m_i=m_i(U(\ell ,k,d))$. Notice that $m_{\ell}=k$. By the discussion done after  Proposition \ref{mainlinear}, if $V$ is a strongly stable monomial ideal generated in degree $d$ such that $m_{\ell}(V)=k$, then there must exist a strongly stable piecewise lexsegment ideal $U$ such that $m_i(U)=m_i(V)$ and containing the $k$ biggest monomials $u\in S_d$ such that $m(u)=\ell$. By Lemma \ref{keyextremal2} $U(\ell,k,d)\subset U$, so that $m_i\leq m_i(V)$ for all $i$.
It is possible to characterize the possible numerical sequences like these. To this purpose, we need to introduce a notion. Given a natural number $a$ and a positive integer $d$, consider the $d$th Macaulay representation of $a$, say $a=\sum_{i=1}^d\binom{k(i)}{i}$. For all integer numbers $j$, we set:
\[a^{\langle d,j\rangle}=\sum_{i=1}^d\binom{k(i)+j}{i+j},\]
where we put $\binom{p}{q}=0$ whenever $p$ or $q$ are negative, and $\binom{0}{0}=1$.
Notice that $a^{\langle d,0\rangle}=a$ and $a^{\langle d,1\rangle}=a^{\langle d\rangle}$.


\begin{lemma}\label{mireferee1}
If $k\leq \binom{\ell+d-2}{\ell-1}$, then:
\[m_i(U(\ell ,k,d))=k^{\langle \ell-1,i-\ell\rangle} \ \ \forall \ i=1,\ldots ,\ell.\]
Furthermore, if $i\geq 2$, then $k^{\langle \ell-1,i-\ell\rangle}=\min\{a:k\leq a^{\langle i-1,\ell-i\rangle}\}$.
\end{lemma}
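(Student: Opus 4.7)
The plan is to establish the two assertions in turn. For $i=\ell$, the first equality is the definition of $U(\ell,k,d)$ ($m_\ell(U(\ell,k,d))=k$) and the second is trivial ($a=k$ works). For $i=1$, a direct check gives $m_1(U(\ell,k,d))=1=k^{\langle\ell-1,1-\ell\rangle}$ when $k\geq 1$ (and both are $0$ when $k=0$), since $x_1^d$ is the only generator of $U(\ell,k,d)$ with $m=1$. So assume $2\leq i<\ell$.

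First I would prove the auxiliary identity
\[m_i(U(\ell,k,d))=\min\bigl\{a:k\leq a^{\langle i-1,\ell-i\rangle}\bigr\}\]
by a minimality argument. For any strongly stable monomial ideal $V\subset S$ generated in degree $d$ with $m_\ell(V)\geq k$, let $U'$ be the piecewise lexsegment of type $(d,(m_1(V),\ldots,m_n(V)))$ from Discussion~\ref{remop}. Then $m_j(U')=m_j(V)$ for every $j$ and $U'$ contains, by definition, the $m_\ell(V)\geq k$ biggest monomials of degree $d$ with $m=\ell$; Lemma~\ref{keyextremal2} then yields $U(\ell,k,d)\subset U'$, so $m_i(U(\ell,k,d))\leq m_i(U')=m_i(V)$. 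Conversely, iterating the $O$-sequence inequality $m_{j+1}(V)\leq m_j(V)^{\langle j-1\rangle}$ of Proposition~\ref{mainlinear} from $j=i$ up to $j=\ell-1$ gives $k\leq m_\ell(V)\leq m_i(V)^{\langle i-1,\ell-i\rangle}$, so $m_i(V)\geq\min\{a:k\leq a^{\langle i-1,\ell-i\rangle}\}$. Taking $V=U(\ell,k,d)$ for the lower bound, and $V=V_*$ for $V_*$ realizing the minimal $O$-sequence that satisfies $m_\ell=k$ (provided by the realization direction of Proposition~\ref{mainlinear}) for the upper bound, gives the claimed equality.

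Next I would prove the numerical identity $k^{\langle\ell-1,i-\ell\rangle}=\min\{a:k\leq a^{\langle i-1,\ell-i\rangle}\}$. Set $K=k^{\langle\ell-1,i-\ell\rangle}$ and write the Macaulay representation $k=\sum_{j=1}^{\ell-1}\binom{k(j)}{j}$. Unpacking $K=\sum_{j=1}^{\ell-1}\binom{k(j)+i-\ell}{j+i-\ell}$, only the summands with $j\geq\ell-i+1$ yield nontrivial binomials, plus a boundary contribution at $j=\ell-i$ equal to $1$ if $k(\ell-i)\geq\ell-i$ and $0$ if $k(\ell-i)=\ell-i-1$. A case analysis on this boundary and on whether consecutive $k(j)$'s differ by exactly $1$ (which triggers a propagation via Pascal) lets me write down the Macaulay representation of $K$ of order $i-1$ explicitly; direct computation then gives $K^{\langle i-1,\ell-i\rangle}\geq k$, while strict monotonicity of the Macaulay up-shift together with a boundary check yields $(K-1)^{\langle i-1,\ell-i\rangle}<k$, identifying $K$ as the required minimum.

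The hardest step will be this numerical identity: the paper's convention $\binom{p}{q}=0$ whenever $p$ or $q$ is negative (with $\binom{0}{0}=1$) breaks Pascal at the boundary, so the case analysis must carefully track when the shift $i-\ell$ puts an index exactly at the boundary where this convention comes into play, and how a potential "consecutive block" of Macaulay coefficients $k(1),k(2),\ldots$ starting from the bottom interacts with the shift.
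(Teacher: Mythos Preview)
Your approach is essentially the paper's, with the two halves swapped: the paper first proves the numerical identity $k^{\langle\ell-1,i-\ell\rangle}=\min\{a:k\leq a^{\langle i-1,\ell-i\rangle}\}$ via Macaulay representations (using exactly the boundary case split you describe, together with \cite[Lemma 4.2.7]{BH}), and only then deduces $m_i(U(\ell,k,d))=k^{\langle\ell-1,i-\ell\rangle}$ by the same minimality sandwich you set up (iterated $O$-sequence inequality for the lower bound, existence of a piecewise lexsegment $W$ with $m_i(W)=k^{\langle\ell-1,i-\ell\rangle}$ plus Lemma~\ref{keyextremal2} for the upper bound).

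There is one genuine gap in your order of operations. To invoke the realization direction of Proposition~\ref{mainlinear} for your $V_*$, you need the sequence $a_i=\min\{a:k\leq a^{\langle i-1,\ell-i\rangle}\}$ (for $i=2,\ldots,\ell$, with $a_1=1$) to already be an $O$-sequence with $a_2\leq d$; you assert this without argument. It is true, and not hard: since the shifted expansion of $a$ is again a Macaulay representation one has $a^{\langle i-1,\ell-i\rangle}=\bigl(a^{\langle i-1\rangle}\bigr)^{\langle i,\ell-i-1\rangle}$, so $k\leq a_i^{\langle i-1,\ell-i\rangle}$ gives $a_i^{\langle i-1\rangle}\geq a_{i+1}$; and $d^{\langle 1,\ell-2\rangle}=\binom{\ell+d-2}{\ell-1}\geq k$ gives $a_2\leq d$. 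The paper sidesteps this check entirely because, having proved the numerical identity first, it knows the target sequence is $\bigl(k^{\langle\ell-1,1-\ell\rangle},\ldots,k\bigr)$ and can appeal to ``the above argument and the discussion following Proposition~\ref{mainlinear}'' for its realizability. Either order works, but in yours this verification must be made explicit.
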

\begin{proof}
First we will show that, if $i\geq 2$, then:
\[k^{\langle \ell-1,i-\ell\rangle}=\min\{a:k\leq a^{\langle i-1,\ell-i\rangle}\}.\]
Let us consider the $(\ell-1)$th Macaulay representation of $k$, namely $k=\sum_{j=1}^{\ell -1}\binom{k(j)}{j}$. So 
\[b=k^{\langle \ell-1,i-\ell\rangle}=\sum_{j=\ell-i}^{\ell-1}\binom{k(j)+i-\ell}{j+i-\ell}.\]

If $\max\{j:k(j)<j\}\geq\ell-i$, then the above one is the $(i-1)$th Macaulay representation of $b$: Therefore $b^{\langle i-1,\ell-i\rangle}=k$, so the statement is obvious in this case.

So we can assume that $\max\{j:k(j)<j\}<\ell-i$. In particular, $k(\ell-i)\geq \ell-i$, so that the $(i-1)$th Macaulay representation of $b-1$ is 
\[b-1=\sum_{j=\ell-i+1}^{\ell-1}\binom{k(j)+i-\ell}{j+i-\ell}.\]
Thus $(b-1)^{\langle i-1,\ell-i\rangle}=\sum_{j=\ell-i+1}^{\ell -1}\binom{k(j)}{j}$, which in this case is smaller than $k$. So $b\leq \min\{a:k\leq a^{\langle i-1,\ell-i\rangle}\}$. On the other hand, let us consider the $(i-1)$th Macaulay representation of $b$, namely $b=\sum_{j=1}^{i-1}\binom{h(j)}{j}$. By \cite[Lemma 4.2.7]{BH}, we infer the inequality
\[(h(i-1),\ldots ,h(1))>(k(\ell-1)+i-\ell,\ldots ,k(\ell-i+1)+i-\ell)\]
in the lexicographical order. Of course the inequality keeps to be true when shifting of $\ell-i$, namely
\[(h(i-1)+\ell-i,\ldots ,h(1)+\ell-i)>(k(\ell-1),\ldots ,k(\ell-i+1))\]
in the lexicographical order. Again using \cite[Lemma 4.2.7]{BH}, we deduce that $b^{\langle i-1,\ell-i\rangle}>k$. So $b\geq \min\{a:k\leq a^{\langle i-1,\ell-i\rangle}\}$, that lets us conclude this part.

Let us prove that 
\[m_i(U(\ell ,k,d))=k^{\langle \ell-1,i-\ell\rangle} \ \ \forall \ i=1,\ldots ,\ell.\]
The condition $k\leq \binom{\ell+d-1}{\ell}$ assures that we can construct $V=U(\ell ,k,d)$. The equality is true for $i=1$, because $k^{\langle \ell-1,1-\ell\rangle}=1$. From Proposition \ref{mainlinear} we have, for all $i=2,\ldots ,\ell$:
\[m_{i+1}(V)\leq m_i(V)^{\langle i-1\rangle} \ , \ \ m_{i+2}(V)\leq m_{i+1}(V)^{\langle i\rangle} \ , \ \ \ldots \ , \ \ k=m_{\ell}(V)\leq m_{\ell -1}(V)^{\langle \ell -2\rangle}.\]
Putting together the above inequalities, we get:
\[k\leq m_i(V)^{\langle i-1,\ell-i \rangle}.\]
From this and what proved above we deduce that:
\[m_i(V)\geq k^{\langle \ell-1,i-\ell\rangle}.\]
From the above argument and the discussion following Proposition \ref{mainlinear}, it is clear that a piecewise lexsegment monomial space $W\subset S_d$ with $m_i(W)=k^{\langle \ell-1,i-\ell\rangle} \ \forall \ i=1,\ldots ,\ell$ must exist. We have $V\subset W$ by Lemma \ref{keyextremal2}, so we get also the inequality:
\[m_i(V)\leq k^{\langle \ell-1,i-\ell\rangle}.\]
\end{proof}

We introduce the function $\TT:\NN^r\rightarrow \NN^r$ such that $\TT(\vv)=(v_1, \ v_1+v_2,\ldots , \ v_1+v_2+\ldots +v_r)$, where $\vv=(v_1,\ldots ,v_r)$. Furthermore, we define $\SS_q(\vv)$ as the last entry of $\TT^q(\vv)$.

%
%

\begin{remark}\label{smax1}
The significance of the above definition is the following: Let $I\subset S=K[x_1,\ldots ,x_n]$ be a stable ideal generated in one degree. One can easily show that, for all $q\in \NN$ and $i\in\{1,\ldots ,n\}$,
\[\SS_q((m_1(I),m_2(I),\ldots ,m_i(I)))=m_i(\m^q I).\]
Notice that we can also rephrase the second condition of Proposition \ref{main} as
\[\mu_{i,j}\geq \SS_1((\mu_{1,j-1},\mu_{2,j-1},\ldots ,\mu_{i,j-1})).\]
\end{remark}

\begin{example}
In the next theorem the functions $\SS_q$ will play a crucial role. Especially, using Remark \ref{smax1}, Lemma \ref{mireferee1} and Remark \ref{keyextremal1}, one has:
\begin{eqnarray*}
\SS_q((k^{\langle \ell-1,1-\ell\rangle},k^{\langle \ell-1,2-\ell\rangle},\ldots ,k^{\langle \ell-1,i-\ell\rangle})) & = & \SS_q((m_1(U(\ell ,k,d)),m_2(U(\ell ,k,d)),\ldots ,m_i(U(\ell ,k,d))))\\
& =  & m_i(\mm^qU(\ell ,k,d)))\\
& = & m_i(U(\ell ,m_{\ell}(\mm^q U(\ell ,k,d)),d+q)))\\
& = & \SS_q((k^{\langle \ell-1,1-\ell\rangle},k^{\langle \ell-1,2-\ell\rangle},\ldots ,k))^{\langle\ell-1,i-\ell\rangle}.
\end{eqnarray*}
Notice that the first time $\SS_q$ is applied to a vector in $\NN^i$, whereas the last time to a vector in $\NN^{\ell}$.
\end{example}

Let $I$ be a homogeneous ideal of $S$ and $\beta_{i,j}=\beta_{i,j}(I)$ its graded Betti numbers. 
Let the extremal Betti numbers of $I$ be
\[\beta_{i_1,i_1+j_1},\beta_{i_2,i_2+j_2},\ldots ,\beta_{i_k,i_k+j_k}.\]
Notice that $k<n$, and up to a reordering, we can assume $0<i_1<i_2<\ldots <i_k<n$ and $j_1>j_2>\ldots >j_k\geq 0$. If $I$ is a stable ideal then, exploiting the Eliahou-Kervaire formula, one can check that $\beta_{i,i+j}(I)$ is extremal if and only if $m_{i+1,j}(I)\neq 0$ and $m_{p+1,q}(I)=0$ for all $(p,q)\neq (i,j)$ such that $p\geq i$ and $q\geq j$. In this case, moreover, we have $\beta_{i,i+j}(I)=m_{i+1,j}(I)$. Before showing the main result of the paper, we introduce the following concept.

\begin{definition}
Let $\iii=(i_1,\ldots ,i_k)$ and $\jjj=(j_1,\ldots ,j_k)$ be such that $0<i_1<i_2<\ldots <i_k<n$, \ $j_1>j_2>\ldots >j_k> 0$. We say that $I\subset S$ is a {\it $(\iii,\jjj)$-lex ideal} if $I=\sum_{p=1}^k(L_p)$, where $L_p$ is a lexsegment ideal generated in degree $j_p$ in $K[x_1,\ldots ,x_{i_p+1}]$.
\end{definition}

\begin{thm}\label{extremalthm}
Let $\iii=(i_1,\ldots ,i_k)$ and $\jjj=(j_1,\ldots ,j_k)$ be such that $0<i_1<i_2<\ldots <i_k<n$ and $j_1>j_2>\ldots >j_k> 0$, and let $b_1,\ldots ,b_k$ be positive integers. For all $p=1,\ldots ,k$ let :
\[\vv^p=(b_p^{\langle i_p,-i_p \rangle},b_p^{\langle i_p,1-i_p \rangle}, \ldots ,b_p^{\langle i_p,i_{p-1}-i_p \rangle})\in \NN^{i_{p-1}+1}.\] 
If $K$ has characteristic $0$, then the following are equivalent:
\begin{itemize}
\item[{\em (i)}] There is a homogeneous ideal $I\subset S$ with extremal Betti numbers $\beta_{i_p,i_p+j_p}(I)=b_p$ for all $p=1,\ldots ,k$.
\item[{\em (ii)}] There is a strongly stable ideal $I\subset S$ with extremal Betti numbers $\beta_{i_p,i_p+j_p}(I)=b_p$ for all $p=1,\ldots ,k$.
\item[{\em (iii)}] $b_k\leq \binom{i_k+j_k-1}{i_k}$ \ and \ $\SS_{j_p-j_{p+1}}(\vv^{p+1})+b_p\leq \binom{i_p+j_p-1}{i_p}$ for all $p=1,\ldots ,k-1$.
\item[{\em (iv)}] There is an $(\iii,\jjj)$-lex ideal $I\subset S$ with extremal Betti numbers $\beta_{i_p,i_p+j_p}(I)=b_p$ for all $p=1,\ldots ,k$.
\end{itemize}
\end{thm}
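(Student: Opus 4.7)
The plan is to establish the chain (i)$\Rightarrow$(ii)$\Rightarrow$(iii)$\Rightarrow$(iv)$\Rightarrow$(ii); the implication (ii)$\Rightarrow$(i) is trivial, and (iv)$\Rightarrow$(ii) is immediate because a sum of $S$-extensions of lexsegments is strongly stable (each summand is strongly stable and strong stability is preserved under sums). For (i)$\Rightarrow$(ii) I would invoke the Bayer--Charalambous--Popescu theorem cited in the introduction: in characteristic $0$ the reverse-lex generic initial ideal $\Gin(I)$ is strongly stable, and the positions and values of extremal Betti numbers are invariants of the reverse-lex $\Gin$.

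For (ii)$\Rightarrow$(iii), start with a strongly stable $I$ realizing the prescribed extremal data; by Eliahou--Kervaire this translates into $m_{i_p+1,j_p}(I)=b_p$. The heart of the argument is the chain
\[
\binom{i_p+j_p-1}{i_p}\;\geq\;\mu_{i_p+1,j_p}(I)\;=\;b_p+m_{i_p+1}(\mm\, I_{\langle j_p-1\rangle})\;\geq\;b_p+m_{i_p+1}(\mm^{\,j_p-j_{p+1}}\,I_{\langle j_{p+1}\rangle}).
\]
The first inequality just counts degree-$j_p$ monomials $u$ with $m(u)=i_p+1$; the equality combines \eqref{mu&m} with the identity $m_i(\mm J)=\sum_{q\leq i}m_q(J)$; the last inequality comes from $\mm\, I_{\langle j_p-1\rangle}\supseteq\mm^{\,j_p-j_{p+1}}I_{\langle j_{p+1}\rangle}$ together with the fact that both ideals are generated in degree $j_p$. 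Since $m_{i_{p+1}+1}(I_{\langle j_{p+1}\rangle})\geq b_{p+1}$ and $I_{\langle j_{p+1}\rangle}$ is strongly stable and generated in one degree, the universal lower bound extracted from the proof of Lemma~\ref{mireferee1} (iterating $m_{i+1}\leq m_i^{\langle i-1\rangle}$ downwards from $m_{i_{p+1}+1}$, using that $a\mapsto a^{\langle d,j\rangle}$ is non-decreasing in $a$) gives $m_i(I_{\langle j_{p+1}\rangle})\geq\vv^{p+1}_i$ for $i=1,\dots,i_p+1$. Combining with Remark~\ref{smax1}, which identifies $m_{i_p+1}(\mm^q J)$ with $\SS_q((m_1(J),\dots,m_{i_p+1}(J)))$, and with the monotonicity of $\SS_q$, one deduces $\SS_{j_p-j_{p+1}}(\vv^{p+1})+b_p\leq\binom{i_p+j_p-1}{i_p}$. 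The case $p=k$ reduces to $b_k\leq\binom{i_k+j_k-1}{i_k}$ from the first inequality alone.

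For (iii)$\Rightarrow$(iv), I would construct an explicit $(\iii,\jjj)$-lex ideal as $I=\sum_{p=1}^k U(i_p+1,c_p,j_p)$, where the sizes $c_p$ are defined by a downward recursion $c_k=b_k$ and
\[
c_p=b_p+\Bigl|\bigl\{u\in S_{j_p}:m(u)=i_p+1,\; u\in\textstyle\sum_{p'>p}U(i_{p'}+1,c_{p'},j_{p'})\bigr\}\Bigr|.
\]
Remark~\ref{keyextremal1} asserts that each $\mm^q U(\ell,k,d)$ is piecewise lexsegment up to $\ell$, so the cardinality in the recursion is tractable and is computed by iterated $\SS$-values. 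Condition~(iii) is designed precisely to force $c_p\leq\binom{i_p+j_p-1}{i_p}$, so that each $U(i_p+1,c_p,j_p)$ is well defined. The resulting $I$ is a sum of $S$-extensions of lexsegments in $K[x_1,\dots,x_{i_p+1}]$, hence $(\iii,\jjj)$-lex. The extremal Betti numbers follow from Eliahou--Kervaire: by the very definition of $c_p$, exactly $b_p$ of the degree-$j_p$ monomials in $U(i_p+1,c_p,j_p)$ with $m=i_p+1$ survive as minimal generators of $I$ (the remaining ones having been absorbed into $\mm\,I_{\langle j_p-1\rangle}$), while the vanishing of all forbidden $m_{q,l}(I)$ in the staircase region strictly above $(i_p,j_p)$ is automatic since each summand $U(i_{p'}+1,c_{p'},j_{p'})$ lives in $K[x_1,\dots,x_{i_{p'}+1}]$ and the strict decrease of the $j_p$'s prevents new generators in the forbidden region.

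The main obstacle is in step (iii)$\Rightarrow$(iv): verifying that the recursive sizes $c_p$ stay bounded by $\binom{i_p+j_p-1}{i_p}$ throughout and that the ideal $I$ so obtained has precisely the claimed extremal Betti numbers, without producing any spurious ones. The technical core is analyzing how $\mm^q$-multiples of piecewise lexsegments up to different indices $\ell$ interact inside a common degree, and it is here that the full strength of Lemmas~\ref{mireferee1}, \ref{keyextremal2} and Remark~\ref{keyextremal1} is used.
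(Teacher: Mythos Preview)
Your proposal follows essentially the same route as the paper's proof. The paper also closes the cycle via (i)$\Leftrightarrow$(ii) (by \cite{BCP}), (iv)$\Rightarrow$(i) trivially, and then proves (ii)$\Rightarrow$(iii) and (iii)$\Rightarrow$(iv); your chain (i)$\Rightarrow$(ii)$\Rightarrow$(iii)$\Rightarrow$(iv)$\Rightarrow$(ii)$\Rightarrow$(i) is equivalent. Your argument for (ii)$\Rightarrow$(iii) is the paper's, only written out more explicitly: the paper's key inequality $m_{i_{p}+1}\bigl(\mm^{j_{p}-j_{p+1}}I_{\langle j_{p+1}\rangle}\bigr)+b_{p}\le\binom{i_{p}+j_{p}-1}{i_{p}}$ is exactly your displayed chain, and the lower bound $m_i(I_{\langle j_{p+1}\rangle})\ge \vv^{p+1}_i$ is obtained in the paper from the discussion preceding the theorem (your appeal to Lemma~\ref{mireferee1} plus monotonicity of $a\mapsto a^{\langle d,j\rangle}$ is the same content). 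For (iii)$\Rightarrow$(iv), the paper builds the ideal by the identical downward recursion ${}^kI=U(i_k{+}1,b_k,j_k)$, ${}^{p}I={}^{p+1}I+U(i_p{+}1,c_p,j_p)$, with your $c_p$ matching the paper's $\SS_{j_p-j_{p+1}}(\vv^{p+1})+b_p$ at the first step; the identification of $({}^{p+1}I)_{\langle j_p\rangle}\cap K[x_1,\dots,x_{i_p+1}]$ with a suitable $U$-ideal via Remark~\ref{keyextremal1} is exactly what the paper uses.

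One point to be aware of: both you and the paper treat the recursion beyond the first step (that is, $p\le k-2$) summarily. After the first step the partial ideal ${}^{k-1}I$ restricted to $K[x_1,\dots,x_{i_{k-1}+1}]$ is $U(i_{k-1}{+}1,c_{k-1},j_{k-1})$ with $c_{k-1}=b_{k-1}+\SS_{j_{k-1}-j_k}(\vv^k)>b_{k-1}$, so the $m_i$'s feeding the next $\SS$ are computed from $c_{k-1}$, not from $b_{k-1}$; your claim that ``condition~(iii) is designed precisely to force $c_p\le\binom{i_p+j_p-1}{i_p}$'' therefore requires more than a direct comparison with $\SS_{j_p-j_{p+1}}(\vv^{p+1})$. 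The paper handles this with the same phrase ``keeping on with the recursion'', so your level of detail matches the paper's; but if you want a fully self-contained write-up you should spell out why the bound persists at every stage (the identity in the Example following Lemma~\ref{mireferee1}, relating $\SS_q$ on truncated versus full Macaulay vectors, is the relevant tool).
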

\begin{proof}
(i) $\iff$ (ii) follows by by \cite[Theorem 1.6]{BCP}. (iv) $\implies$ (i) is obvious.

(ii) $\implies$ (iii). By what said before the theorem, we can replace $\beta_{i_p,i_p+j_p}(I)$ by $m_{i_p+1,j_p}(I)$ with $m_{r+1,s}(I)=0$ for all $(r,s)\neq (i_p,j_p)$ such that $r\geq i_p$ and $s\geq j_p$. Since $m_{i_k+1,j_k}(I)=b_k$, we have
\[b_k\leq \binom{i_k+j_k-1}{i_k}.\]
We must have that:
\begin{eqnarray*}
m_{i_{k-1}+1}\bigl(\mm^{j_{k-1}-j_k}(I_{\langle j_k\rangle})\bigr)+b_{k-1} & = & |\{\mbox{monomials $u\in I_{\langle j_k\rangle}\cap S_{j_{k-1}}$ with $m(u)=i_{k-1}+1$}\}|\\
& & + |\{\mbox{monomials $u\in I_{j_{k-1}}\setminus I_{\langle j_{k-1}-1\rangle}$ with $m(u)=i_{k-1}+1$}\}|\\
& \leq & |\{\mbox{monomials $u\in S_{j_{k-1}}$ with $m(u)=i_{k-1}+1$}\}|\\
& = & \binom{i_{k-1}+j_{k-1}-1}{i_{k-1}} 
\end{eqnarray*}
From the discussion before the theorem, we also have:
\[m_i\bigl(I_{\langle j_k \rangle}\bigr)\geq b_k^{\langle i_k,i-i_k-1\rangle} \ \ \forall \ i\leq i_k.\]
We eventually get:
\[m_{i_{k-1}+1}\bigl(\mm^{j_{k-1}-j_k}(I_{\langle j_k\rangle})\bigr)\geq \SS_{j_{k-1}-j_{k}}(\vv^{k}). \]
Putting together the above inequalities we obtain, for $p=k-1$,
\[\SS_{j_p-j_{p+1}}(\vv^{p+1})+b_p\leq \binom{i_p+j_p-1}{i_p},\]
and we can go on in the same way to show this for all $p=1,\ldots ,k-1$.

(iii) $\implies$ (iv). If $b_k\leq \binom{i_k+j_k-1}{i_k}$, then we can form $U(i_k+1,b_k,j_k)$. Let us call ${}^kI=U(i_k+1,b_k,j_k)$. We have that:
\[m_{i_{k-1}+1}\biggl(({}^kI)_{\langle j_{k-1}\rangle}\biggr) = \SS_{j_{k-1}-j_{k}}(\vv^{k}). \]
From Remark \ref{keyextremal1}, we deduce that 
\[({}^kI)_{\langle j_{k-1}\rangle}\cap K[x_1,\ldots ,x_{i_{k-1}+1}]=U(i_{k-1}+1,\SS_{j_{k-1}-j_{k}}(\vv^{k}),j_{k-1})\cap K[x_1,\ldots ,x_{i_{k-1}+1}].\] 
By the assumed numerical conditions, $U(i_{k-1}+1,\SS_{j_{k-1}-j_{k}}(\vv^{k})+b_{k-1},j_{k-1})$ exists and contains exactly $b_{k-1}$ new monomials $u$ such that $m(u)=i_{k-1}+1$. Therefore set:
\[{}^{k-1}I'=(U(i_{k-1}+1,\SS_{j_{k-1}-j_{k}}(\vv^{k})+b_{k-1},j_{k-1})).\]
and
\[{}^{k-1}I={}^kI+{}^{k-1}I'.\]
By construction ${}^{k-1}I$ is a $((i_{k-1},i_k),(j_{k-1},j_k))$-lex ideal with extremal Betti numbers $\beta_{i_{k-1},i_{k-1}+j_{k-1}}({}^{k-1}I)=b_{k-1}$ and $\beta_{i_k,i_k+j_k}({}^{k-1}I)=b_k$. Keeping on with the recursion we will end up with the desired $(\iii,\jjj)$-lex ideal $I={}^1I$.
\end{proof}

\begin{remark}
A different characterization of the extremal Betti numbers was also given by Crupi and Utano in \cite{CU}.
\end{remark}

\begin{remark}
For the reader who likes more the language of algebraic geometry, Theorem \ref{extremalthm} can be used in the following setting: Let $X\subset \PP^{n-1}$ be a projective scheme over a field of characteristic $0$ and $\mathcal{I}_X$ its ideal sheaf. Then, by the graded version of the Grothendieck's local duality, $\beta_{i,i+d}$ is an extremal Betti number of the ideal $\bigoplus_{m\in\NN}\Gamma(X,\mathcal{I}_X(m))\subset S$ if and only if, setting $p=n-i-1$ and $q=d-1$:
\begin{compactitem}
\item[(1)] $p\geq 1$;
\item[(2)] $\dim_K(H^p(X,\mathcal{I}_X(q-p)))=\beta_{i,i+d}\neq 0$.
\item[(3)] $H^r(X,\mathcal{I}_X(s-r))=0$ for all $(r,s)\neq (p,q)$ with $1\leq r\leq p$ and $s\geq q$.
\end{compactitem}
\end{remark}

\begin{example}
Let us consider the following Betti table:
\[\left(
                   \begin{array}{ccccccc}
                     * & * & * & * & * & * & \cdots \\
                     * & * & * & a & 0 & 0 & \cdots \\
                     * & * & * & 0 & 0 & 0 & \cdots \\
                     * & * & b & 0 & 0 & 0 & \cdots \\
                     0 & 0 & 0 & 0 & 0 & 0 & \cdots
                   \end{array}
                 \right).\]

\vspace{1mm}

Theorem \ref{extremalthm} implies that there exists a homogeneous ideal in a polynomial ring (of characteristic $0$) whose Betti table looks like the above one (where $a$ and $b$ are extremal) if and only if we are in one of the following cases:
\begin{itemize}
\item[(i)] $a=2$ and $b=1,2$;
\item[(ii)] $a=1$ and $b=1,2,3,4$.
\end{itemize}
In fact, we have $b=\beta_{2,6}$ and $a=\beta_{3,5}$. Theorem \ref{extremalthm} implies $a\leq 4$.

If $a=2$, then the vector $\vv^2\in\NN^3$ is:
\[\vv^2=(1,2,2).\]
Therefore $\SS_2(\vv^2)=8$, and Theorem \ref{extremalthm} gives $8+b\leq 10$. So we get $b=1,2$ as desired.

If $a=1$, then the vector $\vv^2\in\NN^3$ is:
\[\vv^2=(1,1,1).\]
So $\SS_2(\vv^2)=6$, and Theorem \ref{extremalthm} yields $b=1,2,3,4$ as desired.

Eventually, if $a>2$, a positive integer $b$ satisfying the conditions of Theorem \ref{extremalthm} does not exist.
\end{example}

\end{document}